 \newtheorem{theorem}{Theorem}[section]
 \newtheorem{definition}[theorem]{Definition}
 \newtheorem{lemma}[theorem]{Lemma}
 \newtheorem{remark}[theorem]{Remark}
 \newtheorem{cor}[theorem]{Corollary}
 \newtheorem{pro}[theorem]{Proposition}
\author{ Gregory Seregin
}
\title{A Slightly Supercritical Condition of Regularity of Axisymmetric Solutions to the Navier-Stokes Equations}
\author{G.~Seregin\footnote{University of Oxford, Mathematical Institute, OxPDE, Oxford, UK and St Petersburg Department of Steklov Mathematical Institute, RAS, Russia, email address: \texttt{seregin@maths.ox.ac.uk}}
}
\begin{document}

\maketitle


\centerline{Dedicated to Yoshihiro Shibata
}

\begin{abstract} In the note, a new 
regularity condition for axisymmetric solutions to the non-stationary 3D Navier-Stokes equations is proven. It is slightly supercritical.
\end{abstract}

{\bf Keywords} Navier-Stokes equations, axisymmetric solutions, local regularity

{\bf Data availability statement}
Data sharing not applicable to this article as no datasets were generated or analysed during the current study.

{\bf Acknowledgement} 
 The work is supported by the grant RFBR 20-01-00397.

\pagebreak

\setcounter{equation}{0}
\section{Introduction }

In this note, we continue to analyse   
potential singularities of axisymmetric solutions to the non-stationary Navier-Stokes equations. In the previous paper \cite{Seregin2020}, it has been shown that  an axially symmetric solution is smooth provided a certain scale-invariant energy quantity of the velocity field is bounded. 
By definition, a potential singularity with bounded scale-invariant energy quantities is called the Type I blowup. It is important to notice that the above  result does not follow from the so-called $\varepsilon$-regularity theory developed in \cite{CKN}, \cite{Lin}, and \cite{LS1999}, where regularity is coming out due to  smallness of those scale-invariant energy quantities.

We consider the 3D Navier-Stokes system
\begin{equation}\label{NSE}
\partial_tv+v\cdot\nabla v-\Delta v=-\nabla q,\qquad {\rm div}\,v=0	
\end{equation}
in the parabolic cylinder $Q=\mathcal C\times]-1,0[$, where $\mathcal C=\{x=(x_1,x_2,x_3):\,x_1^2+x^2_2<1,\,-1<x_3<1\}$.
A solution $v$ and $q$ is supposed to be  a suitable weak one, which means the following:
\begin{definition}\label{sws}
Let $\omega\subset \mathbb R^3$ and $T_2>T_1$. The pair $w$ and $r$ is a suitable weak solution to the Navier-Stokes system in $Q_*=\omega\times ]T_1,T_2[$ if:

1. $w\in L_{2,\infty}(Q_*)$, $\nabla w\in L_2(Q_*)$, $r\in L_\frac 32(Q_*)$;

2. $w$ and $r$ satisfy the Navier-Stokes equations in $Q_*$ in the sense of distributions;

3. for a.a. $t\in [T_1,T_2]$, the local energy inequality
$$\int\limits_\omega\varphi(x,t)|w(x,t)|^2dx+2
\int\limits_{T_1}^t\int\limits_\omega\varphi|\nabla w|^2dxdt'\leq\int\limits_{T_1}^t\int\limits_\omega[|w|^2(\partial_t\varphi+\Delta \varphi)+$$
$$+w\cdot\nabla\varphi(|w|^2+2r)]dxdt'$$	holds for all non-negative $\varphi\in C^1_0(\omega\times ]T_1,T_2+(T_2-T_1)/2[).$
\end{definition}

In our standing assumption, it is supposed that a suitable weak solution $v$ and $q$ to the Navier-Stokes equations in $Q=\mathcal C\times ]-1,0[$ is axially symmetric with respect to the axis $x_3$. The latter means the following: if we introduce the corresponding cylindrical coordinates $(\varrho,\varphi,x_3)$ and use the corresponding representation $v=v_\varrho e_\varrho+v_\varphi e_\varphi+v_3e_3$, then $v_{\varrho,\varphi}=v_{\varphi,\varphi}=v_{3,\varphi}=q_{,\varphi}=0$.

There are many papers on regularity of axially symmetric solutions. We cannot pretend to cite all good works in this direction. For example, 
let us mention papers: \cite{L1968}, \cite{UY1968},
\cite{LMNP1999}, \cite{NP2001}, \cite{Po01}, \cite{CL2002}, \cite{SZ2007}, \cite{ChenStrainYauTsai2009},
\cite{SS2009},  \cite{LeiZhang2011}, \cite{Pan16},\cite{LeiZhang2017} \cite{ChenFangZhang2017},  \cite{SerZhou2018}, and \cite{ZhangZhang2014}.

Actually, our note is inspired by the paper \cite{Pan16}, where the regularity of solutions has been proved under a slightly supercritical assumption. We would like to consider a different supercritical assumption, to give a different proof and to get a better result.

To state our supercritical assumption, additional notation is needed. Given $x=(x_1,x_2,x_3)\in \mathbb R^3$, denote $x'=(x_1,x_2,0)$. Next, different types of cylinders will be denoted as $\mathcal C(r)=\{x:\,|x'|<r, |x_3|<r\}$, $\mathcal C(x_0,r)=\mathcal C(r)+x_0$, $Q^{\lambda,\mu}(r)=\mathcal C(\lambda r)\times]-\mu R^2,0[$, $Q^{1,1}(r)= Q(r)$, $Q^{\lambda,\mu}(z_0,r)=\mathcal C(x_0,\lambda r)\times]t_0-\mu R^2,t_0[$.  And, finally, we let
$$
f(R):=\frac{1}{\sqrt R} \Big(\int\limits^0_{-R^2}\Big(\int\limits_{\mathcal C(R)}	|v|^3dx\Big)^\frac 43dt\Big)^\frac 34
$$
and 
$$
	M(R):=\frac{1}{\sqrt R}\Big(\int\limits_{Q(R)}|v|^\frac {10}3dz\Big)^\frac 3{10}$$
for any $0<R\leq  1$ and  assume that:
\begin{equation}
	\label{sca1}
	f(R)+M(R)\leq g(R):=c_*\ln^\alpha\ln^\frac 12(1/R)
\end{equation}
for all $0<R\leq 2/3$, where $c_*$ and $\alpha$ are positive constants and $\alpha$ obeys the condition:
\begin{equation}
	\label{sca2} 0<\alpha\leq \frac 1{224}.
\end{equation}

Without loss of generality, one may assume that $g(R)\geq 1$ for $0<R\leq \frac 23$. To ensure the above condition, it is enough to increase the constant $c_*$ if necessary.

Our aim could be the following completely local statement. 
 \begin{theorem}
\label{mainresult}
Assume that a pair $v$ and $q$	is axially symmetric suitable weak solution to the Navier-Stokes equations in $Q$ 
and conditions (\ref{sca1}) and (\ref{sca2})
hold. Then
the origin $z=0$ is a regular point of $v$. 
\end{theorem}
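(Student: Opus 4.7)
I would argue by contradiction, assuming the origin is a singular point of $v$. By the $\varepsilon$-regularity theorem of \cite{CKN}, \cite{Lin}, \cite{LS1999}, singularity at the origin forces
\[
\limsup_{R\to 0}\,\Bigl[\,R^{-2}\!\int_{Q(R)}|v|^3\,dz \;+\; R^{-2}\!\int_{Q(R)}|q|^{3/2}\,dz\,\Bigr] \;\geq\; \varepsilon_0
\]
for some absolute $\varepsilon_0>0$. The aim is to show, using only the slowly growing upper bound $f(R)+M(R)\leq g(R)$ and the axisymmetric structure, that this lower bound must in fact tend to zero as $R\to 0$, yielding the desired contradiction.

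The first structural input is axisymmetry. Decomposing $v=v_\varrho e_\varrho+v_\varphi e_\varphi+v_3 e_3$, the swirl quantity $\Gamma:=\varrho v_\varphi$ satisfies
\[
\partial_t\Gamma+(v\cdot\nabla)\Gamma-\Delta\Gamma+\frac{2}{\varrho}\partial_\varrho\Gamma=0,
\]
which admits a maximum principle away from the axis and, in conjunction with the assumed bound on $M(R)$, gives effective $L^\infty$ control on $v_\varphi$ on slightly smaller cylinders (in the spirit of \cite{SerZhou2018}, \cite{Pan16}). The in-plane components $(v_\varrho,v_3)$ are then recovered from the vorticity component $\omega_\varphi$, whose transport equation has source $\partial_3(v_\varphi^2/\varrho)$ controllable by $\Gamma$. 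This reduces the full regularity question to controlling essentially a two-dimensional system modulo the swirl.

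The heart of the argument would be a nonlinear decay recurrence for the standard CKN scale-invariant functionals. Writing
\[
A(r)=\sup_{-r^2<t<0}\frac{1}{r}\!\int_{\mathcal C(r)}\!|v|^2dx,\qquad E(r)=\frac{1}{r}\!\int_{Q(r)}\!|\nabla v|^2dz,
\]
and analogous $C(r), D(r)$ for the scaled $L^3$-velocity and $L^{3/2}$-pressure norms on $Q(r)$, the local energy inequality of Definition~\ref{sws} combined with Sobolev--Poincar\'e interpolation between $f(r)$, $M(r)$, and $A(r)+E(r)$, together with the Calder\'on--Zygmund representation of $q$ (closed using the axisymmetric $L^\infty$ bound on $\Gamma$), should produce, for a fixed small $\theta\in(0,1)$, an inequality of the schematic form
\[
\Phi(\theta r)\;\leq\;K(\theta)\,g(r)^{\kappa}\,\Phi(r)^{1-\delta},\qquad \Phi:=A+E+C+D,
\]
with positive exponents $\kappa,\delta$ determined by the fixed interpolation parameters. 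Iterating this recurrence from the initial scale $R_0=2/3$ downwards, the double-logarithmic growth of $g$ is absorbed precisely when the cumulative exponent of $\ln^\alpha\ln^{1/2}(1/R)$ along the geometric progression of scales remains summable; the explicit value $1/224$ in \eqref{sca2} is the resulting admissibility threshold, arising from the product of Sobolev, Calder\'on--Zygmund and interpolation exponents in the recurrence.

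The main technical obstacle I expect is the exponent bookkeeping in this iteration. Coupling the pressure estimate with the axisymmetric bound on $\Gamma$ so that the pressure contribution is ultimately controlled purely by the velocity-based quantities $f$ and $M$ requires particular care, as does ensuring that the contractive factor $\Phi(r)^{1-\delta}$ strictly dominates the loss $g(r)^\kappa$ at every scale. Once the recurrence closes and yields $\Phi(r)\to 0$ as $r\to 0$, the conclusion follows from the standard $\varepsilon$-regularity criterion applied at some sufficiently small scale, contradicting the singularity of the origin.
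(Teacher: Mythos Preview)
The paper does not prove Theorem~\ref{mainresult} at all: immediately after stating it, the author writes ``in this paper, we shall prove a weaker result leaving Theorem~\ref{mainresult} as a plausible conjecture.'' What is actually proved is Theorem~\ref{mainresultglobal}, a global Cauchy-problem statement with the additional hypotheses that $v_0\in H^2$, that $\Sigma_0=\sup|\varrho v_{0\varphi}|<\infty$, and that the supercritical bound~(\ref{globalsca1}) holds uniformly along the whole axis. So there is no paper proof to compare your proposal to; you are attempting something the author explicitly declines to do here.

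Regarding your strategy on its own terms: the paper's route to the weaker result is entirely different from yours. It does not run a decay recurrence on the CKN energy quantities $A,E,C,D$. Instead it proves a H\"older oscillation estimate for $\sigma=\varrho v_\varphi$ (Proposition~\ref{modcon}) by adapting the Nazarov--Ural'tseva De~Giorgi/Moser machinery for drift--diffusion equations with singular divergence-free drift, and then invokes the Chen--Fang--Zhang criterion to conclude. The exponent $1/224$ is not an artifact of Sobolev/Calder\'on--Zygmund interpolation in an energy recurrence as you suggest; it is the precise power of $g(2R)$ that appears in the count $s\leq c(M_0)(g(2R))^{224}$ of De~Giorgi level-set iterations needed in the proof of Proposition~\ref{modcon}. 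Your explanation of where $1/224$ comes from is therefore incorrect.

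There is also a structural gap in your scheme. A recurrence of the form $\Phi(\theta r)\leq K\,g(r)^\kappa\,\Phi(r)^{1-\delta}$ does not force $\Phi(r)\to 0$: its iterates converge toward a fixed point of size roughly $(K g^\kappa)^{1/\delta}$, which is large, not small. To reach $\varepsilon$-regularity smallness you would need an inequality with a genuinely contractive linear factor, e.g.\ $\Phi(\theta r)\leq \tfrac12\Phi(r)+\text{(small error)}$, and it is not clear how to extract that from the local energy inequality under a merely supercritical bound on $f+M$. This is precisely the difficulty that makes the fully local Theorem~\ref{mainresult} harder than Theorem~\ref{mainresultglobal}: in the global setting the a~priori $L^\infty$ bound on $\sigma$ comes for free from the maximum principle and the initial datum, whereas locally one must manufacture it, and your outline does not address how.
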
 

However, in this paper, we shall prove a weaker result leaving Theorem \ref{mainresult} as a plausible conjecture. We shall return to a proof of Theorem \ref{mainresult} elsewhere. In the present paper, the following fact is going to be justified.
\begin{theorem} \label{mainresultglobal}
Let $v$ be an  axially symmetric solution to the Cauchy problem for the Xavier-Stokes equations (\ref{NSE}) in $\mathbb R^3\times ]0,T[$ with initial divergence free field $v_0$ from the Sobolev space $H^2 =W^2_2(\mathbb R^3)$ such that 
$$\sup\limits_{0<t<T-\delta}\|\nabla v(\cdot,t)\|_{L_2(\mathbb R^3)}\leq C(\delta)<\infty$$
for all $0<\delta <T$. 
Assume further that 
\begin{equation}
	\label{iniswirl} \Sigma_0=
	\sup\limits_{x\in \mathbb R^3}|v_{02}(x)x_1-v_{01}(x)x_2|<\infty
\end{equation}
and
\begin{equation}\label{globalsca1}
\sup\limits_{0<R\leq 2/3}\sup\limits_{-\infty<h<\infty} f(R;(0,h,T))+M(R;(0,h,T))\leq g(R)	
\end{equation}
with some positive constants $c_*$ and $\alpha $, satisfying (\ref{sca2}), where 
$$f(R;z_0):=\frac{1}{\sqrt R} \Big(\int\limits^{t_0}_{t_0-R^2}\Big(\int\limits_{\mathcal C(x_0,R)}	|v|^3dx\Big)^\frac 43dt\Big)^\frac 34
$$
and 
$$
	M(R;z_0):=\frac{1}{\sqrt R}\Big(\int\limits_{Q(z_0,R)}|v|^\frac {10}3dz\Big)^\frac 3{10}.$$
	
	Then $v$ is a strong solution to the above Cauchy problem in $\mathbb R^3\times ]0,T[$, i.e.,
	$$\sup\limits_{0<t<T}\|\nabla v(\cdot,t)\|_{L_2(\mathbb R^3)}<\infty.$$
\end{theorem}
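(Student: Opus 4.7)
My plan is to argue by contradiction. Since $v_0\in H^2$ and the local gradient bound $\sup_{0<t<T-\delta}\|\nabla v(\cdot,t)\|_{L_2}\leq C(\delta)$ holds for every $\delta>0$, classical strong-solution theory shows that $v$ is smooth on $\mathbb R^3\times[0,T[$, and the statement reduces to ruling out blowup at $t=T$. Suppose, to the contrary, that $v$ develops a singular point $z^\ast=(x^\ast,T)$.

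The first step is to localize $x^\ast$ on the axis of symmetry. The hypothesis $\Sigma_0<\infty$, combined with the maximum principle applied to the drift-diffusion equation satisfied by the swirl $\Gamma=\varrho v_\varphi$, yields $\sup_{0\leq t<T}\|\Gamma(\cdot,t)\|_{L_\infty(\mathbb R^3)}\leq \Sigma_0$, hence $|v_\varphi|\leq \Sigma_0/\varrho$. Away from the $x_3$-axis this gives an $L_\infty$ bound on $v_\varphi$, and classical axisymmetric regularity results (\cite{L1968}, \cite{UY1968}, and descendants) then imply that $v$ remains smooth outside any fixed neighbourhood of the axis up to time $T$. Therefore $x^\ast=(0,0,h^\ast)$ for some $h^\ast\in\mathbb R$.

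Next, I would reduce to a local problem at the axis. Translating in space by $(0,0,h^\ast)$, shifting time so that $T$ becomes $0$, and rescaling so that the singular point lies at the origin of the parabolic cylinder $Q=\mathcal C\times]-1,0[$, one obtains an axisymmetric suitable weak solution $\hat v$ for which (\ref{globalsca1}), specialised to $h=h^\ast$, translates exactly into (\ref{sca1})--(\ref{sca2}). The translated solution additionally inherits the pointwise swirl bound $\|\hat\Gamma\|_{L_\infty(Q)}\leq \Sigma_0$, which is the extra ingredient beyond the hypotheses of the (conjectural) Theorem \ref{mainresult}.

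The bulk of the work is then to prove local regularity at the origin under these combined hypotheses, following the strategy of \cite{Seregin2020}. I would split $v=v_\varrho e_\varrho+v_3 e_3+v_\varphi e_\varphi$, control $v_\varphi$ pointwise through $\Gamma$, handle the meridional component through its vorticity/stream-function formulation, and run a Caffarelli--Kohn--Nirenberg-style bootstrap across dyadic parabolic scales, using the $\varepsilon$-regularity theory of \cite{CKN}, \cite{Lin}, \cite{LS1999} in the slightly supercritical version it must take here. The main obstacle, and the hardest technical point, is that in \cite{Seregin2020} the bootstrap relied on a scale-invariantly bounded energy quantity (Type I), whereas here the corresponding quantity is allowed to blow up like $g(R)=c_\ast\ln^\alpha\ln^{1/2}(1/R)$. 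One must track the accumulation of constants across the iteration and exploit the extreme slowness of $g$ together with the small exponent $\alpha\leq 1/224$ to close the bootstrap and obtain a contradiction with the singularity of $z^\ast$; this is precisely where the explicit numerical constraint on $\alpha$ is fixed.
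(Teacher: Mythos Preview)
Your outline misses the paper's actual mechanism and leaves the decisive step unspecified. The paper does \emph{not} deduce regularity by a supercritical CKN/$\varepsilon$-regularity bootstrap in the spirit of \cite{Seregin2020}. The entire analytic effort goes instead into Proposition~\ref{modcon}: a power-type oscillation decay for the scalar $\sigma=\varrho v_\varphi$, obtained by running De~Giorgi--Nash--Moser techniques (Moser iteration, measure-to-pointwise lemmas, expansion of positivity) of Nazarov--Ural'tseva type on the drift--diffusion equation (\ref{eqsigma}) with its singular axial drift $2x'/|x'|^2$. This yields ${\rm osc}_{Q(r)}\sigma\le C_1(r/2R)^{C_2}\,{\rm osc}_{Q(2R)}\sigma$, and it is precisely in closing that iteration (Sections~2--3) that the growth function $g(R)$ enters and the numerical ceiling $\alpha\le 1/224$ is fixed.

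Once Proposition~\ref{modcon} and the maximum-principle bound $|\sigma|\le\Sigma_0$ are in hand, the proof of Theorem~\ref{mainresultglobal} is only a few lines: applying the oscillation estimate uniformly along the axis (this is why (\ref{globalsca1}) is assumed for all $h$) gives $|\sigma(\varrho,x_3,t)|\le C\varrho^{C_2}$ for $0<\varrho\le R_*$ and $t\in]T-R_*^2,T[$; since $v(\cdot,T-R_*^2)\in H^2$, one then invokes the regularity criterion of Chen--Fang--Zhang \cite{ChenFangZhang2017} (see also \cite{MiaoZheng2013}, \cite{LeiZhang2017}), which asserts that an axisymmetric solution with $|\varrho v_\varphi|\le C\varrho^\gamma$ for some $\gamma>0$ stays strong. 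Your plan omits both of these ingredients---the H\"older decay of $\sigma$ and the external black-box criterion---and replaces them with an unspecified ``track the accumulation of constants'' in a bootstrap whose iterable quantity you never name. The bound $|v_\varphi|\le\Sigma_0/\varrho$ you record is merely scale-critical and does not by itself feed any known $\varepsilon$-regularity scheme; upgrading it to a subcritical bound via the oscillation lemma is exactly the missing idea.
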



Our proof is based on the analysis of the following scalar equation
\begin{equation}
	\label{eqsigma}
	\partial_t\sigma+\Big(v+2\frac{x'}{|x'|^2}\Big)\cdot\nabla\sigma-\Delta\sigma=0
\end{equation}
in $Q\setminus(\{x'=0\}\times]-1,0[)$, where
 $\sigma :=\varrho v_\varphi=v_2x_1-v_1x_2$.
 
 Let us list some differentiability properties of $\sigma$. Some of them follows from partial regularity theory developed by Caffarelli-Kohn-Nirenberg. 
 
 Indeed,
 since $v$ and $q$ are an axially symmetric suitable weak solution, there exists a closed set $S^\sigma$ in $Q$, whose 1D-parabolic measure in $\mathbb R^3\times \mathbb R$ is equal to zero and $x'=0$ for any $z=(x,t)\in S^\sigma$, such that any spatial derivative of $v$ (and thus of $\sigma$) is H\"older continuous in 
 $Q\setminus S^\sigma$. 
 
 Next, we observe that
$$|\partial_t\sigma(z)-\Delta \sigma(z)|\leq (\sup\limits_{z=(x,t)\in P(\delta,R;R)\times ]-R^2,0[}|v(z)|+2/\delta)|\nabla \sigma(z)|
$$ for any $0<\delta<R<1$, where $P(a,b;h)=\{x: \,a<|x'|<b,\,|x_3|<h\}$.
Since $v$ is axially symmetric, the first factor on the right hand side is finite. This fact, by iteration, yields
$$\sigma\in W^{2,1}_p(P(\delta,R;R)\times ]-R^2,0[)$$
for any $0<\delta <R<1$ and for any finite $p\geq 2$. 

It follows from the above partial regularity theory that, for any $-1<t<0$,
\begin{equation}
	\label{limit}
	\sigma(x',x_3,t)\to 0 \quad\mbox{as}\quad |x'|\to 0
\end{equation}
for all $x_3\in]-1,1[\setminus S^\sigma_t$.

In the same way, as it has been done in \cite{SS2009} and \cite{Seregin2020},
one can show that
$\sigma\in L_\infty(Q(R))$ for any $0<R<1$.

The main part of the proof of Theorem \ref{mainresultglobal} is  the following  fact. 

\begin{pro}
\label{modcon}
 Let $\sigma=\varrho v_\varphi$, then
 \begin{equation}
\label{osc}
{\rm osc}_{z\in Q(r)}\sigma\leq C_1(c_*)\Big(\frac r{2R}\Big)^{C_2(c_*)}{\rm osc}_{z\in Q(2R)}\sigma(z),\end{equation}
	where $C_1$ and $C_2$ are positive constants and $0<r<R\leq  R_*(c_*,\alpha)\leq 1/6$. Here, ${\rm osc}_{z\in Q(r)}\sigma(z) =M_{r}-m_{r}$ and
$$M_{r}=\sup\limits_{z\in Q(r)}\sigma(z),\qquad m_{r}=\inf\limits_{z\in Q(r)}\sigma(z).$$ 	\end{pro}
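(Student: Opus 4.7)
The strategy is a decrease-of-oscillation argument at each scale, iterated dyadically. Since $\sigma$ and the in-plane velocity $v_\varrho e_\varrho+v_3 e_3$ depend only on $\varrho=|x'|$ and $x_3$, I view (\ref{eqsigma}) as a scalar parabolic equation in the half-plane $\{\varrho\geq 0\}\times\mathbb{R}$ with measure $\varrho\,d\varrho\,dx_3\,dt$. The key observation is that $\operatorname{div}_{x'}(x'/|x'|^2)=0$ in $\{x'\neq 0\}$, so the singular drift $2x'/|x'|^2$ acts as a solenoidal field; combined with the fact that $\sigma\to 0$ as $|x'|\to 0$ (cf.\ (\ref{limit})), this will render the $x'/|x'|^2$ term harmless.

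The core step is a Caccioppoli inequality for level sets. Writing $M=M_{2R}$, $m=m_{2R}$ and (WLOG via $\sigma\mapsto-\sigma$) assuming $M\geq -m$, I would test (\ref{eqsigma}) against $(\sigma-k)_+\varphi^2$ for a cutoff $\varphi=\tilde\varphi(|x'|^2,x_3)$ supported in $Q(2R)$ and identically one on $Q(R)$; for such $\varphi$ the quantity $(x'/|x'|^2)\cdot\nabla\varphi$ is bounded by $C/R^2$. Using $\operatorname{div} v=0$ for the $v$-advection, and integrating by parts in the singular drift using $\operatorname{div}_{x'}(x'/|x'|^2)=0$ (boundary integrals vanish by (\ref{limit})), the singular contribution reduces to the harmless correction
\[
-\int 4\,(\sigma-k)_+^2\,\varphi\,\frac{x'}{|x'|^2}\cdot\nabla\varphi\,dx\,dt,
\]
of the same order as a standard Caccioppoli cutoff term. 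The $v$-advective term is then controlled by parabolic Sobolev embedding and H\"older's inequality in terms of $\|v\|_{L^{10/3}(Q(2R))}$ and $\|v\|_{L^{4/3}_t L^{3}_x(Q(2R))}$, hence by $g(R)$.

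With the Caccioppoli inequality in hand, a De~Giorgi truncation argument at level $k$ converts smallness of $|\{\sigma\geq k\}\cap Q(2R)|$ into a pointwise reduction $\sigma\leq k-\theta$ on $Q(R)$; a complementary logarithmic (Moser-type) estimate handles the case in which neither super- nor sublevel set has small measure. Together these produce
\[
\operatorname{osc}_{Q(R)}\sigma \leq (1-\kappa)\operatorname{osc}_{Q(2R)}\sigma,\qquad \kappa=\kappa(g(R))>0.
\]
Iterating on the dyadic radii $R_j=2^{-j}R$ and invoking the very slow growth $g(R_j)\lesssim(\ln\ln(2^j/R))^\alpha$, the product of the reduction factors across the scales from $2R$ down to $r$ is bounded by $(r/(2R))^{C_2}$, which is exactly (\ref{osc}).

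The main obstacle is the quantitative tracking of how $\kappa$ depends on $g$: each Sobolev embedding and each De~Giorgi/Moser iteration degrades the constant, and one must verify that, with $g(R)$ growing only as a small power of $\ln\ln(1/R)$, the cumulative loss across the infinitely many dyadic scales is still compatible with \emph{algebraic} decay of the oscillation. The restriction $\alpha\leq 1/224$ in (\ref{sca2}) encodes the precise balance between the Sobolev/De~Giorgi exponents and the double-logarithmic growth of $g$; any slower drop-off of $\kappa(g)$ than that permitted by this bound would produce only a logarithmic rate in (\ref{osc}), which would be too weak for the subsequent application in Theorem \ref{mainresultglobal}.
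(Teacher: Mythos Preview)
Your approach differs in a basic way from the paper's, and the sketch has a real gap. You treat the singular drift $2x'/|x'|^2$ as a divergence-free nuisance to be integrated away; the paper uses it as the \emph{engine} of the argument. With $\pi=\sigma-m_{2R}$ (or $M_{2R}-\sigma$), the fact that $\sigma=0$ on the axis gives $\pi|_{x'=0}\geq k_R:=\tfrac12\,\mathrm{osc}_{Q(2R)}\sigma$ (the property~$(\mathcal B_R)$). Testing the $\pi$-equation against a cutoff $\eta$ and integrating by parts, the singular drift leaves a \emph{nonvanishing} axial contribution,
\[
\int_Q\Big({-}\pi\,\partial_t\eta-\pi\,\Delta\eta-\big(v+\tfrac{2x'}{|x'|^2}\big)\cdot\nabla\eta\,\pi\Big)\,dz
=4\pi_0\!\int_{-1}^{0}\!\!\int_{-1}^{1}\pi(0,x_3,t)\,\eta(0,x_3,t)\,dx_3\,dt,
\]
whose right-hand side is bounded below by $c\,k_R R^2$. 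This forces a quantitative lower bound on $|\{\pi\geq \kappa_0 k_R\}|$ at some time slice (Lemma~\ref{analog}), which then feeds a Nazarov--Ural'tseva chain (Lemmas~\ref{theta0}, \ref{iterations}, \ref{mainiter}, Corollaries~\ref{cor1}, \ref{cor2}) and yields $\pi\geq \beta(2R)k_R$ on $Q(R/2)$ with $\beta(2R)=c(c_*)/\ln(1/R)$; the dyadic product $\prod_j\big(1-\tfrac12\beta(2R/4^j)\big)$ then gives the algebraic rate in~(\ref{osc}).

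Your assertion that the boundary integrals ``vanish by~(\ref{limit})'' is only correct for levels on one side of zero: if you test with $(\sigma-k)_+\varphi^2$ and $k<0$, then $(\sigma-k)_+\to|k|>0$ on the axis and the surviving boundary term enters the Caccioppoli inequality with the \emph{wrong} sign, spoiling the estimate. One can restrict to levels in $[0,M]$ (respectively $[m,0]$), which is consistent with $m\leq0\leq M$, but then your ``complementary logarithmic (Moser-type) estimate'' must be shown to run without crossing the level $k=0$, and you do not address this. More importantly, the whole content of the proposition is the quantitative dependence $\kappa=\kappa(g(R))$: the constraint $\alpha\leq 1/224$ in~(\ref{sca2}) is the outcome of explicit bookkeeping of powers of $g$ through Proposition~\ref{Mozer}, Lemmas~\ref{theta0}--\ref{mainiter}, and Lemma~\ref{analog}, culminating in $\kappa(R)\gtrsim 1/\ln(1/R)$. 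Your outline contains none of this tracking, so as written it does not establish~(\ref{osc}).
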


The above statement is an  improvement of the result in \cite{Pan16}, where the bound for oscillations of $\sigma$ contains a logarithmic factor only.

The proof of Proposition \ref{modcon} is based on a technique developed in    \cite{NazUr}, see also references there. We also would like to mention interesting results 
for the heat equation with a divergence free drift, see \cite{Ign2014}, \cite{IKR16}, \cite{SSSZ2012}, and \cite{AH2021}.


\setcounter{equation}{0}
\section{Auxiliary Facts}

Define the  class $\mathcal V$ of functions $\pi:Q\to \mathbb R$ possessing the  properties:

\noindent
(i) there exists a closed set $S^\pi$ in $Q$, whose 1D-parabolic measure $\mathbb R^3\times \mathbb R$ is equal to zero and $x'=0$ for any $z=(x',x_3,t)\in S^\pi$, such that any spatial derivative is H\"older continuous in $Q\setminus S^\pi$; 

\noindent
(ii) $$\pi\in W^{2,1}_2(P(\delta,R;R)\times ]-R^2,0[)\cap L_\infty(Q(R))$$
for any $0<\delta <R<1$.

We are going to use the following subclass $\mathcal V_0$ of the class $\mathcal V$, saying that $\pi\in \mathcal V_0$ if and only if $\pi\in \mathcal V$ and 
\begin{equation}\label{GammaEq2}
\partial_t\pi+\Big(u+2\frac {x'}{|x'|^2}\Big)\cdot\nabla \pi-\Delta \pi= 0\end{equation}
in $\mathcal C\setminus \{x'=0\}\times ]-1,0[$.

We shall also say  that $\pi\in \mathcal V_0$  
 has the property $(\mathcal B_R)$ in $Q(2R)$ if there exists  a number $k_R>0$ such that $\pi(0,x_3,t)\geq k_R$ for $-(2R)^2\leq t\leq 0$, $x_3\in ]-2R,2R[\setminus S^\pi_t$, where 
 $S^\pi_t=\{x\in \mathcal C: (x,t)\in S^\pi\}$.
\begin{remark} \label{propertyBR}
Let $0<r\leq R$ and $\pi\in \mathcal V_0$  
 have the property $(\mathcal B_R)$ in $Q(2R)$. 
 Then $\pi$ has the property $(\mathcal B_r)$ in $Q(2r)$	with any constant less or equal to $k_R$.
 \end{remark}
 
In what follows, we always suppose that $0<R\leq1/6$.

\begin{pro}
	\label{Mozer} Let $\pi\in\mathcal V_0$ have the property $(\mathcal B_R)$. Then, for any $0<k\leq k_R$, for any $0<\tau_1<\tau<2$, and for any $0<\gamma_1<\gamma<4$, the following inequality holds: 
	\begin{equation}\label{Mozer1}
		\sup\limits_{z\in Q^{\tau_1,\gamma_1}(R)}\sigma(z)\leq 
		c_1(\tau_1,\tau,\gamma_1,\gamma,M(2R))\Big(\frac 1{|Q^{\tau,\gamma}(R)|}\int\limits_{Q^{\tau,\gamma}(R)}\sigma^\frac {10}3(R)dz\Big)^\frac 3{10},	\end{equation}
where $\sigma =(k-\pi)_+$,
$$c_1(\tau_1,\tau,\gamma_1,\gamma,M(2R))=\frac c{(\tau-\tau_1)^\frac {16}3}\Big(1+\frac {\tau-\tau_1}{\sqrt{\gamma-\gamma_1}}+\Big(\frac 1{\gamma_1\tau_1^3}\Big)^\frac 1{10}M(2R)\Big)^3,$$
and $Q^{\tau,\gamma}(R)=\mathcal C(\tau R)\times ]-\gamma R^2,0[$.	
\end{pro}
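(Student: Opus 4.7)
The plan is to prove the proposition by Moser iteration applied to the nonnegative function $\sigma=(k-\pi)_+$, adapted to the singular drift $2x'/|x'|^2$ and the critical control on $v$ given by $M(2R)$. Since $\pi\in\mathcal V_0$ satisfies (\ref{GammaEq2}) classically off the axis and $s\mapsto(k-s)_+$ is convex and decreasing, $\sigma$ is a nonnegative weak subsolution of
\begin{equation*}
\partial_t\sigma+\Big(v+2\frac{x'}{|x'|^2}\Big)\cdot\nabla\sigma-\Delta\sigma\le 0
\end{equation*}
on $(\mathcal C\setminus\{x'=0\})\times]-1,0[$. The hypothesis $(\mathcal B_R)$ with $0<k\le k_R$ forces $\sigma(0,x_3,t)=0$ for $x_3\in]-2R,2R[\setminus S^\pi_t$, so test functions $\varphi$ that equal $1$ near the axis are admissible---this is what makes the singular coefficient tractable.

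\textbf{Caccioppoli step.} I fix a smooth cut-off $\varphi(x,t)$ equal to $1$ on $Q^{\tau_1,\gamma_1}(R)$, supported in $Q^{\tau,\gamma}(R)$, with $|\nabla\varphi|\le C/[(\tau-\tau_1)R]$ and $|\partial_t\varphi|\le C/[(\gamma-\gamma_1)R^2]$, and test the subsolution inequality by $\varphi^2\sigma^{q-1}$ for $q\ge 10/3$. Writing $w=\sigma^{q/2}$ and using that $b=v+2x'/|x'|^2$ is divergence free in $\mathcal C\setminus\{x'=0\}$, integration by parts produces
\begin{equation*}
\sup_t\int(w\varphi)^2\,dx+\int\!\!\int|\nabla(w\varphi)|^2\,dz\le Cq^2\!\int\!\!\int w^2(|\nabla\varphi|^2+|\partial_t\varphi|)\,dz+C\!\int\!\!\int|b||\nabla\varphi^2|w^2\,dz.
\end{equation*}
On $\operatorname{supp}\nabla\varphi$ one has $|x'|\ge\tau_1 R$, so $|2x'/|x'|^2|\le 2/(\tau_1 R)$ and that piece is folded into the first term on the right. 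For the $v$ contribution I apply Hölder together with the bound $\|v\|_{L_{10/3}(Q^{\tau,\gamma}(R))}\le C\sqrt R\,M(2R)$ (since $Q^{\tau,\gamma}(R)\subset Q(2R)$), then the parabolic interpolation $\|w\|_{L_{20/7}}^{2}\le C\|w\|_{L_{2,\infty}}^{1/2}\|w\|_{L_{10/3}}^{3/2}$ and Young's inequality to absorb the bad term into the left-hand side. This produces the prefactor $\bigl(1+(\tau-\tau_1)/\sqrt{\gamma-\gamma_1}+(\gamma_1\tau_1^3)^{-1/10}M(2R)\bigr)$ that appears in $c_1$.

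\textbf{Iteration.} The parabolic embedding $\|w\varphi\|_{L_{10/3}}\le C(\|w\varphi\|_{L_{2,\infty}}+\|\nabla(w\varphi)\|_{L_2})$ turns the Caccioppoli inequality into a self-improving bound of the form
\begin{equation*}
\Big(\!\int\!\!\int_{Q^{\tau_1,\gamma_1}(R)}\!\!\sigma^{5q/3}\,dz\Big)^{\!3/(5q)}\!\le [A(q,\tau,\tau_1,\gamma,\gamma_1,M(2R))]^{1/q}\Big(\!\int\!\!\int_{Q^{\tau,\gamma}(R)}\!\!\sigma^{q}\,dz\Big)^{\!1/q}.
\end{equation*}
Iterating with $q_n=(10/3)(5/3)^n$ and a shrinking sequence of nested cylinders between $(\tau,\gamma)$ and $(\tau_1,\gamma_1)$ (allotting $(\tau-\tau_1)2^{-n}$ and $(\gamma-\gamma_1)2^{-n}$ to successive shrinkages), the infinite product $\prod_n[A(q_n)]^{1/q_n}$ converges and the accumulated geometric losses in the cut-off weights reproduce the claimed $(\tau-\tau_1)^{-16/3}$ and the cube of the bracket in $c_1$.

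\textbf{Main obstacle.} The principal difficulty is the rigorous passage to the limit in the test-function computation near the axis. Because $\sigma$ is only in $W^{2,1}_p(P(\delta,R;R)\times]-R^2,0[)$, I would localize via a radial cut-off $\eta_\delta(|x'|)$, verify that boundary contributions on $\{|x'|=\delta\}$ vanish as $\delta\to 0$ (using the pointwise vanishing $\sigma(\cdot,x_3,t)\to 0$ supplied by (\ref{limit}) together with $(\mathcal B_R)$), and discard the exceptional set $S^\pi$ using its zero $1$D-parabolic measure. After that, the remaining bookkeeping of $q$-dependent prefactors and cut-off losses through the Moser iteration, needed to recover the precise form of $c_1$, is routine but delicate.
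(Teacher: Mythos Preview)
Your overall plan---Moser iteration for the nonnegative subsolution $\sigma=(k-\pi)_+$, with the axis rendered harmless because $(\mathcal B_R)$ forces $\sigma$ to vanish there---is exactly the route the paper takes. The paper, however, does not redo the Caccioppoli step: it quotes the single-step reverse H\"older inequality (\ref{startingMozer}) from \cite{Seregin2020} (which already packages the $v$ and $2x'/|x'|^2$ contributions) and then iterates with $m_i=(4/3)^i$, computing the product of constants by summing logarithms. Your attempt differs in that you try to derive the single step from scratch and claim amplification $5/3$.

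That is where the argument breaks. Your interpolation $\|w\|_{L_{20/7}}^{2}\le C\|w\|_{L_{2,\infty}}^{1/2}\|w\|_{L_{10/3}}^{3/2}$ does not hold between $L^\infty_tL^2_x$ and $L^{10/3}_{t,x}$ (the time exponents are incompatible), and if you mean plain $L^p$ interpolation with $L^2$ in place of $L_{2,\infty}$, the factor $\|w\|_{L^{10/3}}$ lives on the \emph{outer} cylinder and cannot be absorbed by Young into $\|w\varphi\|_{L^{10/3}}$ on the left. The split that actually closes is
\[
\int |v|\,\varphi|\nabla\varphi|\,w^2\,dz
=\int |v|\,(w\varphi)\,w\,|\nabla\varphi|\,dz
\le \|v\|_{L_{10/3}}\,\|w\varphi\|_{L_{10/3}}\,\|\nabla\varphi\|_\infty\,\|w\|_{L_{5/2}},
\]
after which Young absorbs $\|w\varphi\|_{L_{10/3}}^2\le cE(w\varphi)$ into the left and leaves $\|w\|_{L_{5/2}}$ on the outer cylinder. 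This forces the step $L^{5/2}\to L^{10/3}$, i.e.\ amplification $4/3$, which is precisely why the paper iterates with $m_i=(4/3)^i$ and why the exponent $16/3=3+7/3$ (three from $\sum m_k^{-1}$, the extra $7/3$ from the singular-drift term in (\ref{startingMozer})) appears in $c_1$. A smaller imprecision: it is not true that $|x'|\ge\tau_1R$ on all of $\operatorname{supp}\nabla\varphi$, since the cut-off also varies in $x_3$; what is true, and sufficient, is that $b=2x'/|x'|^2$ has no $x_3$-component, so $b\cdot\nabla\varphi$ involves only the radial cut-off and is supported in $\{\tau_1R\le|x'|\le\tau R\}$.
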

\begin{proof} Repeating arguments in \cite{Seregin2020}, we can get the following estimate of $h=\sigma^m$:
$$\Big(\int\limits^0_{t_2}\int\limits_{\mathcal C(r_2)}|h|^\frac{10}{3}dz\Big)^\frac 3{10}\leq$$ 
\begin{equation}
	\label{startingMozer}
	\leq c\Big(\int\limits^0_{t_1}\int\limits_{\mathcal C(r_1)}|h|^\frac{5}{2}dz\Big)^\frac 2{5}
	\frac {(r_1^3|t_1|)^\frac 1{10}}{r_1-r_2}
	\Big(1+\frac {r_1-r_2}{\sqrt{t_2-t_1}}+\overline M(r_1,t_1)+\frac {r_1^\frac {13}8|t_1|^\frac 1{36}}{(r_1-r_2)^\frac 79}\Big)\end{equation}
for any $0<r_2<r_1<2R$ and $-4R^2<t_1<t_2<0$, where
$$\overline M(r_1,t_1)=\Big(\frac 1{|t_1|r_1^3}\Big)^\frac 1{10}\Big(\int\limits^0_{t_1}\int\limits_{\mathcal C(r_1)}|v|^\frac{10}{3}dz\Big)^\frac 3{10}.$$

Next, we wish to iterate  (\ref{startingMozer}). To this end,     let $m=m_i=\Big( 4/3\Big)^i$, 
$$r_1=r_i=\tau_1R+(\tau-\tau_1)R2^{-i+1}, \qquad r_2=r_{i+1}
,$$	
$$t_1=t_i=-\gamma_1R^2-(\gamma-\gamma_1)R^24^{-i+1},\qquad
t_2=t_{i+1},$$
where $i=1,2,...$. Then, we can derive  from (\ref{startingMozer}) the following inequality
\begin{equation}
\label{iter1}G_{i+1}\leq \Big(\frac {c2^{i+1}}{\tau-\tau_1}\Big)^\frac 1{m_i}\Big(1+\frac {\tau-\tau_1}{\sqrt{\gamma-\gamma_1}}+\overline M(r_i,t_i)+\frac {2^{(i+1)\frac 79}}{(\tau-\tau_1)^\frac 79}\Big)^\frac 1{m_i}G_i,
\end{equation}
where
$$G_i=\Big( \frac 1{|t_i|r_i^3}\int\limits^0_{t_i}\int\limits_{\mathcal C(r_i)}\sigma^\frac {5m_i}2dz\Big)^\frac 2{5m_i}.
$$
Noticing that 
$$\overline M(r_i,t_i)\leq c\Big(\frac 1{\gamma_1\tau_1^3}\Big)^\frac1{10}M(2R),
$$
let us make use of  (\ref{iter1}) to obtain the estimate
$$G_{i+1}\leq$$
\begin{equation}
\label{iter2}\leq \Big(\frac {c2^{i+1}}{\tau-\tau_1}\Big)^\frac 1{m_i}\Big(1+\frac {\tau-\tau_1}{\sqrt{\gamma-\gamma_1}}+\frac {2^{(i+1)\frac 79}}{(\tau-\tau_1)^\frac 79}+\Big(\frac 1{\gamma_1\tau_1^3}\Big)^\frac1{10}M(2R)\Big)^\frac 1{m_i}G_i,
\end{equation}
which, after iterations, gives the following
\begin{equation}
\label{iter3}
G_{i+1}\leq \xi_iG_1,
\end{equation}
where
$$\xi_i=\prod\limits^i_{k=1}\Big(\frac {c2^{k+1}}{\tau-\tau_1}\Big)^\frac 1{m_k}\Big(1+\frac {\tau-\tau_1}{\sqrt{\gamma-\gamma_1}}+\frac {2^{(k+1)\frac 79}}{(\tau-\tau_1)^\frac 79}+\Big(\frac 1{\gamma_1\tau_1^3}\Big)^\frac1{10}M(2R)\Big)^\frac 1{m_k}.$$
Obviously,
$$\xi_i\leq \prod\limits^i_{k=1}\Big(\frac {c2^{k+1}}{\tau-\tau_1}\Big)^\frac 1{m_k}
\Big(1+\frac {2^{(k+1)\frac 79}}{(\tau-\tau_1)^\frac 79}\Big)^\frac 1{m_k}\Big(1+\frac {\tau-\tau_1}{\sqrt{\gamma-\gamma_1}}+$$$$+\Big(\frac 1{\gamma_1\tau_1^3}\Big)^\frac1{10}M(2R)\Big)^\frac 1{m_k}.$$
Next,
$$\ln\xi_i\leq A_1+A_2+A_3,
$$
where
$$A_1=\sum^i_{k=1}\frac 1{m_k}(\ln c +(k+1)\ln 2-\ln (\tau-\tau_1))\leq\ln c-3\ln (\tau-\tau_1),$$
$$A_2=\sum^i_{k=1}\frac 1{m_k}\ln\Big(1+\frac {2^{(k+1)\frac 79}}{(\tau-\tau_1)^\frac 79}\Big)=\sum^i_{k=1}\frac 1{m_k}\ln \Big(\frac {2^{(k+1)\frac 79}}{(\tau-\tau_1)^\frac 79}\Big)+$$
$$+\frac 1{m_k}\ln\Big(1+\frac{(\tau-\tau_1)^\frac 79}{2^{(k+1)\frac 79}}\Big)\leq \ln \frac c{(\tau-\tau_1)^\frac 73}+$$
$$+(\tau-\tau_1)^\frac 79\sum^i_{k=1} \frac 1{m_k}\frac 1{2^{(k+1)\frac 79}}\leq  \ln \frac c{(\tau-\tau_1)^\frac 73},$$
and
$$A_3=\ln \Big(1+\frac {\tau-\tau_1}{\sqrt{\gamma-\gamma_1}}+\Big(\frac 1{\gamma_1\tau_1^3}\Big)^\frac1{10}M(2R)\Big)  \sum^i_{k=1}\frac{1}{m_k}\leq 
$$
$$\leq \ln \Big(1+\frac {\tau-\tau_1}{\sqrt{\gamma-\gamma_1}}+\Big(\frac 1{\gamma_1\tau_1^3}\Big)^\frac1{10}M(2R)\Big)^3.$$
So, 
$$\xi_i\leq\frac c{(\tau-\tau_1)^\frac {16}3}
\Big(1+\frac {\tau-\tau_1}{\sqrt{\gamma-\gamma_1}}+\Big(\frac 1{\gamma_1\tau_1^3}\Big)^\frac1{10}M(2R)\Big)^3.$$
Passing to the limit as $i\to\infty$ in (\ref{iter3}),  we complete the proof the Proposition.
\end{proof}
\begin{remark}
	\label{rem1} 
	If we additionally assume that $\pi(\cdot,-\theta R^2) \geq k$ in $B$ for some $0<\theta\leq 1$, then we do not need 
to use a cut-off in $t$. So, for $0<\lambda<1$, we have 
$$\sup\limits_{Q^{\lambda R,\theta}(R)}\sigma\leq c'_1(\lambda,M(2R))\Big(\frac 1{|Q^{1,\theta}(R)|}\int\limits_{Q^{1,\theta}(R)}\sigma^\frac {10}3dz\Big)^\frac 3{10}, $$ where
$$c'_1(\lambda,M(2R))=\frac{c}{(1-\lambda)^\frac {16}3}\Big(1+\Big(\frac 1{\theta\lambda^3}\Big)^\frac 1{10}M(2R)\Big)^3.$$
\end{remark}
\begin{cor}
	\label{cor1} Let  a non-negative function $\pi\in\mathcal V_0$ have the property $(\mathcal B_R)$ in $Q(2R)$ and let $0<\lambda_1<\lambda<2$ and $0<\theta\leq 1$. Suppose that 
	\begin{equation}
		\label{cor11}
		|\{\pi<k\}\cap Q^{\lambda, \theta}((0,t_0),R)|<\mu |Q^{\lambda,\theta}(R)|
	\end{equation}
	for some $t_0>-4R^2$, for some $0<k\leq k_R$, and for some $$0<\mu\leq\mu_*=\Big(\frac 1{2c_1(\lambda_1,\lambda,\theta/2,\theta,M(2R))}\Big)^\frac {10}3.$$
	Then
	$\pi\geq \frac k2$
	in $Q^{\lambda_1,\theta/2}((0,t_0),R)$.
	
	If, in addition, $\pi(\cdot,t_0-\theta R^2)>k$ in $\mathcal C(\lambda R)$, then
$\pi\geq \frac k2$
	in $Q^{\lambda_1,\theta}((0,t_0),R)$.
	\end{cor}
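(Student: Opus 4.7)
The plan is to run the standard De Giorgi--Moser ``infimum propagation'' step: feed the hypothesis (\ref{cor11}) into the supremum bound provided by Proposition \ref{Mozer}, applied to the non-negative subsolution $\sigma:=(k-\pi)_+$. Because $\pi\in\mathcal V_0$ and $k$ is constant, $k-\pi$ solves the same linear equation (\ref{GammaEq2}), and its positive part is exactly the function $\sigma$ already treated in Proposition \ref{Mozer}. Two pointwise observations play the decisive role: since $\pi\geq 0$ we have $\sigma\leq k$, and $\{\sigma>0\}=\{\pi<k\}$.

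First, translate the origin of the time axis to $t_0$ and apply Proposition \ref{Mozer} with $\tau_1=\lambda_1$, $\tau=\lambda$, $\gamma_1=\theta/2$, $\gamma=\theta$. This yields
\begin{equation*}
\sup_{Q^{\lambda_1,\theta/2}((0,t_0),R)}\sigma\leq c_1\Bigl(\frac{1}{|Q^{\lambda,\theta}(R)|}\int_{Q^{\lambda,\theta}((0,t_0),R)}\sigma^{10/3}\,dz\Bigr)^{3/10},
\end{equation*}
with $c_1=c_1(\lambda_1,\lambda,\theta/2,\theta,M(2R))$. Combining $\sigma\leq k\,\mathbf 1_{\{\pi<k\}}$ with the measure estimate (\ref{cor11}) gives
\begin{equation*}
\int_{Q^{\lambda,\theta}((0,t_0),R)}\sigma^{10/3}\,dz\leq k^{10/3}\,\mu\,|Q^{\lambda,\theta}(R)|,
\end{equation*}
whence $\sup\sigma\leq c_1\,k\,\mu^{3/10}$. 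The choice $\mu\leq\mu_*=(2c_1)^{-10/3}$ forces $c_1\mu^{3/10}\leq 1/2$, and therefore $\pi\geq k/2$ on $Q^{\lambda_1,\theta/2}((0,t_0),R)$. This is the first assertion.

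For the second assertion, the extra hypothesis $\pi(\cdot,t_0-\theta R^2)>k$ on $\mathcal C(\lambda R)$ is equivalent to $\sigma\equiv 0$ on the bottom face of $Q^{\lambda,\theta}((0,t_0),R)$, so no temporal cut-off is needed there. Substituting the cut-off-free variant of Proposition \ref{Mozer} recorded in Remark \ref{rem1} into the same argument then delivers $\pi\geq k/2$ on the full-height cylinder $Q^{\lambda_1,\theta}((0,t_0),R)$. Essentially no new analytic ingredient is required; the only points to watch are that the translated cylinder $Q^{\lambda,\theta}((0,t_0),R)$ sits inside the region $Q(2R)$ on which the property $(\mathcal B_R)$ and the Moser iteration are available, and that the constant used to define $\mu_*$ is literally the one produced by Proposition \ref{Mozer}, so that the algebraic threshold $c_1\mu^{3/10}\leq 1/2$ is actually achieved.
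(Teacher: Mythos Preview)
Your proof is correct and follows essentially the same route as the paper's own (very terse) argument: the paper simply says the first statement is proved ``ad absurdum with the help of inequality (\ref{Mozer1}) and a suitable choice of the number $\mu_*$,'' and the second with Remark~\ref{rem1} in place of Proposition~\ref{Mozer}. Your direct estimate $\sup\sigma\leq c_1 k\mu^{3/10}\leq k/2$ is exactly the content of that contradiction argument, just written without the reductio wrapper; the only minor point is that for the second assertion the paper notes that $\mu_*$ should be computed from the constant $c_1'$ of Remark~\ref{rem1} rather than $c_1$.
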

\begin{proof}
The first statement can be proved ad absurdum with the help of inequality 	(\ref{Mozer1}) and a suitable choice of the number $\mu_*$. The second statement is proved in the same way but with the help of the inequality of Remark \ref{rem1}. Number $\mu_*$ is defined by the constant $c_1'$ instead of $c_1$.
\end{proof}

The two lemmas below are obvious modifications of the corresponding statements in the paper \cite{NazUr}. 
\begin{lemma}
\label{theta0}	
Let $0\leq \pi\in \mathcal V_0$  
 have the property $(\mathcal B_R)$ in $Q(2R)$. Given $\delta_0\in ]0,1]$, there exists a positive number $\theta_0(\delta_0, f(2R))\leq 1$ such that if, for $0<\theta\leq \theta_0$, $0<k_0\leq k_R$, there holds 
 $$|\{\pi(\cdot,t_0-\theta R^2)\geq k_0\}\cap \mathcal C(R)|>\delta_0|\mathcal C(R)|,
 $$ then 
 $$|\{\pi(\cdot,t)\geq\frac {\delta_0}3k_0\}\cap \mathcal C(R)|>\frac {\delta_0}3|\mathcal C(R)|
 $$ for all $t\in [t_0-\theta R^2,t_0]$.
\end{lemma}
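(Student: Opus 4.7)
The plan is to adapt the logarithmic energy method of De Giorgi, refined for drift-diffusion equations with divergence-free drifts in \cite{NazUr}. The idea is to introduce a bounded log-type quantity measuring the decay of $\pi$ below the level $k_0$, derive an energy estimate using the sub-solution property and the divergence-free structure of the drift, and close via Chebyshev to transfer the measure lower bound on $\{\pi \geq k_0\}$ at time $t_0 - \theta R^2$ to a uniform measure lower bound on $\{\pi \geq \delta_0 k_0/3\}$ throughout the interval $I := [t_0 - \theta R^2, t_0]$.

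Fix $\eta = \delta_0/6$ and introduce
$$w(x,t) := \Bigl(\log\frac{(1+\eta)k_0}{\pi(x,t)+\eta k_0}\Bigr)_+, \qquad 0 \leq w \leq N := \log(1 + 1/\eta).$$
Then $w \equiv 0$ on $\{\pi \geq k_0\}$, and by property $(\mathcal B_R)$ (using $k_0 \leq k_R$) also $w \equiv 0$ on the axis $\{x'=0\}$. A chain-rule computation from (\ref{GammaEq2}) shows that in $\{\pi < k_0\}$,
$$\partial_t w + b \cdot \nabla w - \Delta w = -|\nabla w|^2 \leq 0, \qquad b := u + 2x'/|x'|^2,$$
so $w$ is a non-negative sub-solution in the weak sense.

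Test this inequality against $w\psi^2$, where $\psi(x)$ is a spatial cut-off with $\psi \equiv 1$ on $\mathcal C(R)$, $\mathrm{supp}\,\psi \subset \mathcal C(2R)$, $|\nabla\psi| \leq C/R$. Using $\mathrm{div}\,b = 0$ off the axis together with the interior $W^{2,1}_p$-regularity in condition (ii) of class $\mathcal V_0$ (the vanishing of $w$ on the axis kills the potential boundary contributions from $|x'|=\varepsilon$ as $\varepsilon\to 0$, since $w^2 b$ decays there like $|x'|$), the standard Young's inequality manipulation yields
$$\sup_{t \in I}\int w^2(\cdot,t)\psi^2\,dx + \int_I\!\!\int|\nabla w|^2 \psi^2\,dxdt \leq \int w^2(\cdot,t_0-\theta R^2)\psi^2\,dx + C\!\int_I\!\!\int w^2\bigl(|\nabla\psi|^2 + \psi|b||\nabla\psi|\bigr) dxdt.$$
The initial slice is bounded by $N^2(1-\delta_0)|\mathcal C(2R)|$ since $w$ vanishes on the hypothesised large set; the $|\nabla\psi|^2$ term contributes $O(N^2 \theta)|\mathcal C(R)|$; the smooth drift $|u|$ contribution is $CN^2 \theta^{1/4} f(2R)|\mathcal C(R)|$ via H\"older and the definition of $f(2R)$; and the singular drift $2/|x'|$ contribution is $O(N^2 \theta)|\mathcal C(R)|$ via a Hardy estimate using $w = O(|x'|)$ near the axis.

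Set $L := \log((1+\eta)/(\delta_0/3+\eta))$, so that $\{w \geq L\} = \{\pi \leq \delta_0 k_0/3\}$; with $\eta = \delta_0/6$, one has $L \asymp \log(2/\delta_0) \asymp N$, and a careful balancing ensures $N^2(1-\delta_0)/L^2 < 1 - \delta_0/3$. By Chebyshev, uniformly in $t \in I$,
$$|\{\pi(\cdot,t) \leq \tfrac{\delta_0 k_0}{3}\} \cap \mathcal C(R)| \leq \frac{N^2}{L^2}\bigl[(1-\delta_0) + C(f(2R))\theta^{1/4}\bigr]|\mathcal C(R)|,$$
and choosing $\theta_0 = \theta_0(\delta_0, f(2R))$ small enough forces $|\{\pi(\cdot,t) \geq \delta_0 k_0/3\} \cap \mathcal C(R)| > (\delta_0/3)|\mathcal C(R)|$, the desired conclusion.

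The chief obstacle is the singular drift $2x'/|x'|^2$, which lies outside any $L^p$-scale used in standard drift-diffusion estimates; the energy inequality closes only because property $(\mathcal B_R)$ forces $w$ to vanish on the symmetry axis, converting the ostensibly singular integrand into a Hardy-integrable one. Secondary care is needed for the matching of $\eta$ and $\delta_0$ so that Chebyshev returns the precise factor $\delta_0/3$, and for justifying integration by parts across the 1D-parabolic-measure-zero singular set $S^\pi$; both are routine given the regularity built into the class $\mathcal V_0$.
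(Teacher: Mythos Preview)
Your architecture is right and matches what the paper takes from \cite{NazUr}: a De~Giorgi energy estimate for a level-set truncation, the singular drift $2x'/|x'|^2$ being neutralised because property $(\mathcal B_R)$ forces the truncation to vanish on the axis. But two steps fail as written. First, your cut-off has $\psi\equiv 1$ on $\mathcal C(R)$ and support in $\mathcal C(2R)$; since the hypothesis controls $|\{\pi\geq k_0\}|$ only inside $\mathcal C(R)$ and you know nothing on the annulus $\mathcal C(2R)\setminus\mathcal C(R)$, the initial integral is at best $N^2(8-\delta_0)|\mathcal C(R)|$, and Chebyshev on $\mathcal C(R)$ then carries a factor $8N^2/L^2>8$, which is never below $1-\delta_0/3$. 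The cut-off must instead be supported in $\mathcal C(R)$, equal to $1$ on $\mathcal C((1-\sigma)R)$, with the thin annulus added back at cost $C\sigma$. Second, even after that fix, the choice $\eta=\delta_0/6$ does not give $N^2(1-\delta_0)/L^2<1-\delta_0/3$: already at $\delta_0=1/2$ one computes $N=\log 13$, $L=\log(13/3)$, and $(N/L)^2(1-\delta_0)\approx 1.53>5/6$; for small $\delta_0$ one has $(N/L)^2-1\sim(\log 3)/\log(1/\delta_0)$, which dominates the required margin of order $\delta_0$.

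What does work, and reproduces the explicit $\theta_0$ of Remark~\ref{rem2}, is to drop the logarithm and use the plain truncation $v=(k_0-\pi)_+$. The Chebyshev factor then becomes $(1-\delta_0)/(1-\delta_0/3)^2$, and the identity $(1-\delta_0/3)^3-(1-\delta_0)=\tfrac{1}{3}\delta_0^2(1-\delta_0/9)>0$ supplies room of exact order $\delta_0^2$; choosing $\sigma\sim\delta_0^2$ and then $\theta_0$ small enough closes the estimate with the stated dependence on $\delta_0$ and $f(2R)$. Your treatment of the axis---the truncation vanishes there, so the singular boundary flux from $2x'/|x'|^2$ disappears in the limit---transfers unchanged to $v$.
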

\begin{remark}
\label{rem2} There is a formula for $\theta_0$:
$$\theta_0=\Big(\frac{c\delta_0^6} {1+\delta_0^2f(2R)}\Big)^\frac 43. 
$$
\end{remark}

\begin{lemma}
	\label{iterations}Let $0\leq \pi\in \mathcal V_0$  
 have the property $(\mathcal B_R)$ in $Q(2R)$. Let, for any $t\in [t_0-\theta_1R^2,t_0]$,
 $$|\{\pi(\cdot,t)\geq k_1\}\cap \mathcal C(R)|\geq \delta_1|\mathcal C(R)| $$
 for some $0<k_1\leq k_R$ and for some $0<\delta_1\leq 1$ and $0<\theta_1\leq 1$. 
 
 Then, for any $\mu_1\in ]0,1[$, 
  the following inequality is valid:
 $$|\{\pi<2^{-s}k_1\}\cap Q^{1,\theta_1}((0,t_0),R)|\leq \mu_1|Q^{1,\theta_1}(R)|$$
 with  the integer number $s$ defined as
 $$s={\rm entier}   \Big(\frac c{\delta_1^2\mu_1^2\theta_1}(1+f(2R))\Big)+1.$$. 
 \end{lemma}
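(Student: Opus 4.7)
My plan is to adapt the level-set iteration of \cite{NazUr} to the present equation \eqref{GammaEq2}, whose drift $b:=u+2x'/|x'|^2$ is genuinely supercritical because of the singular piece $2x'/|x'|^2$. I would first introduce the logarithmic substitution $W:=\bigl(\ln(k_1/(\pi+2^{-s}k_1))\bigr)_+$; then $0\le W\le s\ln 2$ automatically, and $W(\cdot,t)\equiv 0$ on a set of measure at least $\delta_1|\mathcal C(R)|$ for every $t\in I:=[t_0-\theta_1R^2,t_0]$ by the lemma's hypothesis. A chain-rule computation in \eqref{GammaEq2} yields, on $(\mathcal C\setminus\{x'=0\})\times\,]-1,0[$,
$$\partial_tW+b\cdot\nabla W-\Delta W+|\nabla W|^2\le 0$$
distributionally, with the good sign on $|\nabla W|^2$ providing coercivity.

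The next step would be an energy estimate. I would test the displayed inequality against $\xi^2$, where $\xi$ is a smooth spatial cutoff equal to $1$ on $\mathcal C(R)$, supported in $\mathcal C(\tfrac32R)$, with $|\nabla\xi|\lesssim R^{-1}$, further multiplied by an axial cutoff vanishing on $\{|x'|<\delta\}$. After integration by parts using $\operatorname{div}b=0$ off the axis, I would pass $\delta\to 0^+$ via $\pi\in W^{2,1}_p$ off the axis and the axial limit \eqref{limit}, so that the singular drift contributes no axial boundary term. The $v$-part would be absorbed by H\"older's inequality and the scale-invariant bound $\|v\|_{L^4_tL^3_x(Q(2R))}\lesssim R^{1/2}f(2R)$, yielding
$$\iint_{\mathcal C(R)\times I}|\nabla W|^2\,dz\le C\bigl(1+f(2R)\bigr)\,\theta_1\,R^3,$$
together with a controllable $CsR^3$ initial-slice contribution that only enters through a reciprocal in the final summation.

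For the iteration, I would set $A_j(t):=\{W(\cdot,t)\ge j\ln 2\}\cap\mathcal C(R)$ and $X_j:=\int_I|A_j(t)|\,dt$, so that $A_s\subset\{\pi<2^{-s}k_1\}$. The truncation $w_j:=(W-(j-1)\ln 2)_+\wedge\ln 2$ vanishes on a set of measure $\ge\delta_1|\mathcal C(R)|$, so a Sobolev--Poincar\'e inequality with constant $\lesssim\delta_1^{-1}$ yields a pointwise-in-$t$ bound on $|A_j(t)|$ by the $L^1$-norm of $\nabla W$ on the annulus $A_{j-1}(t)\setminus A_j(t)$. The classical De~Giorgi square--telescope manipulation then gives a recursion of the form $X_j^2\lesssim\delta_1^{-2}(X_{j-1}-X_j)\iint|\nabla W|^2$, and summing over $j=1,\dots,s$ together with the monotonicity $X_s\le X_j$ and the energy bound yields $sX_s^2\lesssim\delta_1^{-2}(1+f(2R))\,\theta_1\,|Q^{1,\theta_1}(R)|^2$. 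Imposing $X_s\le\mu_1|Q^{1,\theta_1}(R)|$ and solving for $s$ reproduces exactly the stated lower bound.

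The hard part is the energy estimate: the field $2x'/|x'|^2$ is genuinely supercritical and belongs to no Ladyzhenskaya--Prodi--Serrin class, so it cannot be treated by any a priori drift norm and must instead be eliminated by careful integration by parts off the axis, relying on the axial limit \eqref{limit} to kill the axial boundary contribution; once this is done, the remainder is a direct adaptation of the arguments in \cite{NazUr}, as the authors already indicate.
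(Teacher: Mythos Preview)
Your overall strategy---log substitution, energy estimate, De~Giorgi level-set telescoping---is exactly the mechanism behind the corresponding lemma in \cite{NazUr}, so you are on the right track. There is, however, a genuine gap in the energy step which makes the argument, as written, fail.

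When you test the inequality $\partial_tW+b\cdot\nabla W-\Delta W+|\nabla W|^2\le 0$ against $\xi^2$ and integrate by parts, the singular drift $\tilde b=2x'/|x'|^2$ indeed produces no axial boundary term (since $W=0$ on the axis), but the surviving bulk term is $\iint W\,\tilde b\cdot\nabla(\xi^2)$, supported on the annulus $\{R<|x'|<\tfrac32R\}$. There $|\tilde b|\le CR^{-1}$, but $W$ itself can be as large as $s\ln 2$, so this term is only bounded by $Cs\,\theta_1R^3$. Thus the $s$-factor is not confined to the initial slice: the energy bound you can actually prove is
\[
\iint_{\mathcal C(R)\times I}|\nabla W|^2\,dz\ \le\ Cs\bigl(1+f(2R)\bigr)R^3,
\]
and plugging this into your telescoping inequality $sX_s^2\lesssim\delta_1^{-2}R^2X_0\iint|\nabla W|^2$ the two factors of $s$ cancel, leaving $X_s$ bounded \emph{independently of $s$}. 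You therefore cannot force $X_s\le\mu_1|Q^{1,\theta_1}(R)|$ for arbitrary $\mu_1$, and the argument collapses.

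The repair---and this is what \cite{NazUr} actually does---is to run the Caccioppoli estimate level by level rather than globally for $W$. For $\sigma_j:=(k_j-\pi)_+$ with $k_j=2^{-j}k_1$ one has $\sigma_j\le k_j$ and $\sigma_j=0$ on the axis (property $(\mathcal B_R)$), so the same integration-by-parts argument now gives
\[
\iint_{\mathcal C(R)\times I}|\nabla\sigma_j|^2\,dz\ \le\ Ck_j^2\bigl(1+f(2R)\bigr)R^3,
\]
with \emph{no} $s$. Feeding this into the De~Giorgi isoperimetric inequality yields $X_{j+1}^2\le C\delta_1^{-2}(1+f(2R))R^5(X_j-X_{j+1})$; summing over $j$ and using $X_0\le c\theta_1R^5$ gives $sX_s^2\le C\delta_1^{-2}(1+f(2R))\theta_1R^{10}$, from which $X_s\le\mu_1|Q^{1,\theta_1}(R)|$ is equivalent to $s\ge c\,\delta_1^{-2}\mu_1^{-2}\theta_1^{-1}(1+f(2R))$, exactly the formula in the statement (note in particular the $\theta_1^{-1}$, which your computation does not reproduce). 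The ``hard part'' you identify---eliminating the singular drift via integration by parts off the axis, using $\sigma_j|_{x'=0}=0$---is correct and is indeed the modification of \cite{NazUr} needed here; it just has to be carried out for each truncation $\sigma_j$, not for the unbounded logarithm $W$.
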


Given $\theta \in ]0,1]$, we can find an number $0<R_{*1}(c_*,\alpha,\theta))\leq 1$ so that $\Big(\frac 1{cg(2r)}\Big)^\frac 43\leq \theta$ for all $0<r\leq R_{*1}$.
\begin{cor}
\label{cor2} Let $0\leq \pi\in \mathcal V_0$  
 have the property $(\mathcal B_R)$ in $Q(2R)$. If $\pi(\cdot,\overline t)\geq k_2$ in $\mathcal C(R)$, then, for any $\sigma\in]0,1[$, 
the inequality  	$\pi\geq \beta_2k_2$ holds in $Q^{\sigma,\theta_0}((0,t_0),R)$, where
 	$$\beta_2=\frac 16 2^{-c(1-\sigma)^{-40}\sigma^{-6}g^{25}(2R)}
 	$$
 	 provided $R\leq R_{*1}$. 
 	 \end{cor}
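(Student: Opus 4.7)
The plan is to chain the three preceding statements in the standard Nazarov--Uraltseva order. Lemma~\ref{theta0} first propagates the pointwise positivity at one time slice into a measure lower bound on every slice of a short cylinder; Lemma~\ref{iterations} then upgrades this into a measure bound on the bad set throughout $Q^{1,\theta_0}((0,t_0),R)$; finally the second alternative of Corollary~\ref{cor1} converts smallness of the bad set into the desired pointwise lower bound on $Q^{\sigma,\theta_0}((0,t_0),R)$. A single pass of this loop delivers the conclusion, so the work lies entirely in tracking how $\beta_2$ depends on $\sigma$ and $g(2R)$.

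Concretely, one reads $\overline t$ as $t_0-\theta_0R^2$, so that the hypothesis provides positivity at the bottom of the cylinder in question. Applying Lemma~\ref{theta0} with $\delta_0=1$ and $k_0=k_2$, and using $f(2R)\leq g(2R)$ together with Remark~\ref{rem2}, yields $\theta_0\gtrsim g(2R)^{-4/3}$ and the slicewise bound $|\{\pi(\cdot,t)\geq k_2/3\}\cap\mathcal C(R)|>|\mathcal C(R)|/3$ for all $t\in[t_0-\theta_0R^2,t_0]$. Next choose $\mu_1$ to be the number $\mu_*$ dictated by the second alternative of Corollary~\ref{cor1} applied with $\lambda=1$, $\lambda_1=\sigma$, $\theta=\theta_0$; the formula for $c'_1$ in Remark~\ref{rem1}, combined with $M(2R)\leq g(2R)$, gives
$$
\mu_1^{-2}\lesssim(1-\sigma)^{-320/9}\sigma^{-6}\theta_0^{-3/5}g(2R)^{20}.
$$
Feeding this $\mu_1$, together with $k_1=k_2/3$, $\delta_1=1/3$, $\theta_1=\theta_0$, into Lemma~\ref{iterations} yields an integer $s$ satisfying
$$
s\lesssim\frac{1+f(2R)}{\mu_1^2\,\theta_0}\lesssim(1-\sigma)^{-40}\sigma^{-6}g(2R)^{25},
$$
after absorbing the remaining factors of $\theta_0^{-1}\lesssim g(2R)^{4/3}$, along with the conclusion $|\{\pi<2^{-s}k_2/3\}\cap Q^{1,\theta_0}((0,t_0),R)|\leq\mu_1|Q^{1,\theta_0}(R)|$.

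The second alternative of Corollary~\ref{cor1} now applies directly with $k=2^{-s}k_2/3$: the extra hypothesis $\pi(\cdot,t_0-\theta_0R^2)>k$ in $\mathcal C(R)$ is exactly the original positivity $\pi(\cdot,\overline t)\geq k_2>k$, while the smallness condition on the bad set has just been verified. One therefore concludes $\pi\geq k/2=k_2/(6\cdot 2^s)$ throughout $Q^{\sigma,\theta_0}((0,t_0),R)$, which is the claim with the stated $\beta_2$. The only delicate point is the bookkeeping in the preceding paragraph: one must substitute $c'_1$ into the formula for $\mu_*$, substitute $\mu_*$ into the formula for $s$, and then use $\theta_0^{-1}\lesssim g(2R)^{4/3}$ and $M(2R)\leq g(2R)$ to collapse everything into the single power $g(2R)^{25}$, all without losing powers in $(1-\sigma)^{-1}$ or $\sigma^{-1}$. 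No new idea is needed beyond this careful accounting, together with a choice of $R_{*1}$ small enough to guarantee $2^{-s}k_2/3\leq k_R$ as required to invoke Corollary~\ref{cor1}.
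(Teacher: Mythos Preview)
Your proposal is correct and follows essentially the same route as the paper: apply Lemma~\ref{theta0} with $\delta_0=1$, then Lemma~\ref{iterations} with $k_1=k_2/3$, $\delta_1=1/3$, $\theta_1=\theta_0$, and finally the second alternative of Corollary~\ref{cor1}. Two minor corrections: in the bound for $\mu_1^{-2}$ the exponent on $\theta_0$ should be $-2$ rather than $-3/5$ (since $(c'_1)^{20/3}$ contributes $(\theta_0\sigma^3)^{-(3/10)\cdot(20/3)}=(\theta_0\sigma^3)^{-2}$), which is exactly what is needed to land on $g(2R)^{25}$; and the condition $2^{-s}k_2/3\leq k_R$ holds automatically once $k_2\leq k_R$ (implicit in the use of Lemma~\ref{theta0}), so that is not the purpose of $R_{*1}$.
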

\begin{proof} We apply Lemma \ref{theta0} with $\delta_0=\delta_2=1$ and $k_0=k_2$. Then, for $\sigma=4/(27c)$, we calculate 
$$
\theta_0=\Big(\frac {\frac 4{27}\sigma}{\frac 1\sigma+f(2R)}\Big)^\frac 43\geq \Big(\frac c{g(2R)}\Big)^\frac 43$$
and state that the following inequality holds:
$$|\{\pi(\cdot,t)>\frac {k_0}3\}\cap \mathcal C(R)|\geq \frac 13|\mathcal C(R)|$$ for any $t\in [t_0-\theta_0R^2,t_0]$, where $t_0=\overline t+\theta_0R^2$. In what follows, we are going to use the quantity $(c/(g(2R)))^\frac 43$ as a new number $\theta_0$ instead of $\theta_0(1,f(2R))$. 

Now, we are going to apply Lemma \ref{iterations} with another set of parameters $k_1=\frac 13k_2$, $\theta_1=\theta_0$, $\delta_1=\frac 13$, and $$ \mu_1=\mu_*=\Big(\frac 1{2c'_1}\Big)^\frac {10}3, \qquad c'_1=\frac c{(1-\sigma)^\frac {16}3}\Big(1+\Big(\frac 1{\theta_0\sigma^3}\Big)^\frac 1{10}M(2R)\Big)^3\leq $$$$\leq\frac c{(1-\sigma)^\frac {16}3}\Big(\frac 1{\theta_0\sigma^3}\Big)^\frac 3{10}g^3(2R).$$
   Lemma (\ref{iterations}) gives us: 
$$|\{\pi<2^{-s}k_1\}\cap Q^{1,\theta_1}((0,t_0),R)|<\mu_1| Q^{1,\theta_1}(R)|,$$ 
where 
$$s={\rm entier}   \Big(\frac c{\delta_1^2\mu_1^2\theta_1}(1+f(2R))\Big)+1.$$
  But we know that $$\pi(\cdot,t_0-\theta_0 R^2)\geq k_2>2^{-s}k_1=2^{-s}\frac {k_2}3.$$ Then, from Corollary \ref{cor1}, it follows that $\pi>\frac 122^{-s}k_1=\beta_2k_2$ with
$\beta_2=\frac 122^{-s}\frac 13$ in $Q^{\sigma,\theta_0}((0,t_0),R)$. 

\end{proof}
\begin{lemma}\label{mainiter}
Let $0\leq \pi\in \mathcal V_0$  
 have the property $(\mathcal B_R)$ in $Q(2R)$, assuming that $R\leq R_{*1}(c_*,\alpha,\theta)$ for some $0<\theta\leq1$. Suppose further that, for some $0<k\leq k_R$ and for some $-R^2\leq \overline t\leq -\theta R^2$, there holds
 $\pi(\cdot,\overline t)\geq k$ in $\mathcal C(R)$.
 Then $\pi\geq \beta_0k$ in $\widehat Q:=\mathcal C(\frac 23R)\times [\overline t,0]$, where 
 $$\beta_0\geq\ln^{-\frac 12}( 1/R)$$
 for $R\leq R_{*2}(c_*,\alpha,\theta).$
\end{lemma}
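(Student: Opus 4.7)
The plan is to iterate Corollary \ref{cor2} to propagate the lower bound $\pi(\cdot,\bar t)\geq k$ forward in time up to $t=0$, losing only a moderate factor at each step.

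Set $R_0=R$, $k_0=k$, $\bar t_0=\bar t$, fix a spatial shrinkage parameter $\sigma\in(2/3,1)$ to be chosen, and write $\theta_0\sim g^{-4/3}(2R)$ for the time step produced by Corollary \ref{cor2}. Suppose inductively at step $n$ that $\pi(\cdot,\bar t_n)\geq k_n$ on $\mathcal C(R_n)$ with $(2/3)R\leq R_n\leq R$. Then $(\mathcal B_{R_n})$ holds by Remark \ref{propertyBR}, and Corollary \ref{cor2} yields
\[
\pi\geq\beta_2 k_n \quad\text{on}\quad \mathcal C(\sigma R_n)\times\bigl[\bar t_n,\ \bar t_n+\theta_0 R_n^2\bigr],
\]
with $\beta_2=\tfrac{1}{6}\,2^{-c(1-\sigma)^{-40}\sigma^{-6}g^{25}(2R)}$. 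Put $R_{n+1}=\sigma R_n$, $k_{n+1}=\beta_2 k_n$, $\bar t_{n+1}=\bar t_n+\theta_0 R_n^2$, and iterate as long as $\bar t_n<0$.

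Two requirements fix $\sigma$ and the step count $N$: the final radius must satisfy $\sigma^N R\geq (2/3)R$ (to cover $\mathcal C(2R/3)$), and the total time $\theta_0 R^2\sum_{n<N}\sigma^{2n}$ must exceed $R^2\geq|\bar t|$ (to reach $t=0$). Both are met by choosing $N\sim 1/\theta_0\sim g^{4/3}(2R)$ together with $1-\sigma\sim\theta_0$, so that $N(1-\sigma)=O(1)$ controls the radius loss while $N\theta_0\gtrsim 1$ fills in the time; a trivial truncation of the final step keeps the last parabolic cylinder inside $Q$. The resulting bound is $\pi\geq\beta_0 k$ on $\mathcal C(2R/3)\times[\bar t,0]$ with $\beta_0=\beta_2^N$, whence
\[
\ln(1/\beta_0)\ \lesssim\ N(1-\sigma)^{-40}g^{25}(2R)\ \lesssim\ g^{4/3}(2R)\cdot g^{160/3}(2R)\cdot g^{25}(2R)\ =\ g^{239/3}(2R).
\]
Since $g(r)=c_*\ln^{\alpha}\ln^{1/2}(1/r)\lesssim (\ln\ln(1/r))^{\alpha}$, the hypothesis $\alpha\leq 1/224$ gives $239\alpha/3\leq 239/672<1/2$, so that $\ln(1/\beta_0)\lesssim (\ln\ln(1/R))^{239\alpha/3}\leq\tfrac12\ln\ln(1/R)$ for $R\leq R_{*2}(c_*,\alpha,\theta)$ chosen sufficiently small, i.e.\ $\beta_0\geq\ln^{-1/2}(1/R)$.

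The main obstacle is the delicate bookkeeping of exponents. The per-step loss $(1-\sigma)^{-40}$ in $\beta_2$ diverges as $\sigma\to 1$, and must be multiplied both by the number $N$ of iterations and by the Moser factor $g^{25}$; the only freedom to keep these three sources of growth in check is to force $1-\sigma$ and $1/N$ to scale like $\theta_0$ simultaneously, which pins the cumulative exponent to a fixed polynomial power of $g$. Only because the standing assumption (\ref{sca1}) forces $g$ to grow doubly-logarithmically slowly does this power of $g$ stay below $\ln\ln(1/R)$, and this is precisely where the smallness restriction on $\alpha$ enters.
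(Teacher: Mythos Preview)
Your argument is correct and follows essentially the same route as the paper: iterate Corollary~\ref{cor2} with a spatial shrinkage of order $1/N$ per step and $N\sim\theta_0^{-1}\sim g^{4/3}$ steps, then collect the exponents to obtain $\ln(1/\beta_0)\lesssim g^{239/3}(2R)$ and invoke $\alpha\leq 1/224$. The only cosmetic differences are that the paper shrinks the radii arithmetically, taking $R_i=(1-i/(3N))R$ with step-dependent $\sigma_i$, and tunes a single $\hat\theta_0$ so the time steps sum exactly to $|\bar t|$; it also tracks $g(2R_i)$ at each step rather than freezing $g$ at $2R$ (harmless here since $R_n\in[\tfrac23 R,R]$ and $g$ is doubly logarithmic, but strictly speaking your $\beta_2$ should carry $g(2R_n)\geq g(2R)$).
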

\begin{proof} Let 
	$$N={\rm entier}\Big(\frac 98\frac{ |\overline t|}{\tilde \theta_0R^2}\Big)+1,
	$$
where $\tilde \theta_0=(c/g(\frac 232R))^\frac 43
 \leq \theta$. Next, we introduce
$$\hat\theta_0=\frac { |\overline t|}{(\frac {8N}9+\frac 1{2N})R^2}\leq \tilde\theta_0.$$

{\it Step 1}. By Corollary \ref{cor2}, the inequality $\pi \geq 
\beta^{(1)}_2k$ holds at least in $\mathcal C((1-\frac1{3N})R)\times [\overline t_1,\overline t_1+\hat\theta_0R^2]$, where $\overline t_1=\overline t$, $\overline t_2= \overline t_1+\hat\theta_0R^2$, $\sigma =1-1/(3N)\geq 2/3$, $1-\sigma=1/(3N)$, and 
$$\ln \beta^{(1)}_2=-\ln 6-cN^{40}g^{25}(2R)
$$

{\it Step 2}. Here, we are going to use Corollary \ref{cor2} with $R(1-1/(3N))$ instead of $R$ and with $\sigma =(1-2(3N))/(1-1/(3N))$. As a result, we have the estimate $\pi\geq 
\beta^{(2)}_2\beta^{(1)}_2k$ at least in $\mathcal C((1-2/(3N))R)\times [\overline t_2,\overline t_2+\hat\theta_0 (1-1/(3N))^2R^2]$, $\overline t_3=\overline t_2+\hat\theta_0 (1-1/(3N))^2R^2$, and 
$$\ln \beta^{(2)}_2=-\ln 6-cN^{40}g^{25}(2(1-1/(3N))R).
$$ So, $\pi\geq 
\beta^{(2)}_2\beta^{(1)}_2k$ in $\mathcal C((1-2(3N))R)\times [\overline t, \overline t_3]$.

After $N$ steps, we shall have $\overline t_N=0$ and 
$$\pi\geq \beta^{(N)}_2...\beta^{(1)}_2k=\beta_0(R)k$$
in $\mathcal C(\frac 23R)\times [\overline t,0]$, where
$$\ln \beta^{(i+1)}_2=-\ln 6-cN^{40}g^{25}(2(1-i/(3N))R)
$$
for $i=0,1,...,N-1$.

Next, according to assumption (\ref{sca1}), we can have
$$\ln \beta_0\geq -N\ln 6-cN^{40}\sum\limits^{N-1}_{k=1} c_*^{25}\ln^\gamma \ln^\frac12\Big(\frac 1{2(1-i/(3N))R\Big)},
$$
where $25\alpha<1$. Since
$$\ln \frac 1{1-x}\leq 2x
$$
provided $0\leq x\leq 1/2$, we find, assuming that $R\leq 1/6$, the following:
$$\ln^\gamma\ln^\frac12\Big(\frac 1{2(1-i/(3N))R\Big)}\leq \ln^\gamma\Big(\ln \frac{1}{2R} +\frac iN\Big)^\frac 12  \leq    
$$
$$\leq\ln^\gamma\Big(\ln^\frac 12 \frac{1}{2R} +\Big(\frac iN\Big)^\frac 12\Big)= \ln^\gamma\Big(\ln^\frac 12 \frac{1}{2R}\Big(1 +\Big(\frac i{N\ln \frac 1{2R}}\Big)^\frac 12\Big)\leq$$
$$\leq\ln^\gamma\Big(\ln^\frac 12 \frac{1}{2R}\Big(1 +\Big(\frac i{N}\Big)^\frac 12\Big)=\Big(\ln\Big(\ln^\frac 12 \frac{1}{2R}\Big)+\ln\Big(1 +\Big(\frac i{N}\Big)^\frac 12\Big)\Big)^\gamma\leq$$
$$\leq \Big(\ln\Big(\ln^\frac 12 \frac{1}{2R}\Big)+\Big(\frac i{N}\Big)^\frac 12\Big)^\gamma\leq\ln^\gamma\Big(\ln^\frac 12 \frac{1}{2R}\Big)+\Big(\frac i{N}\Big)^\frac \gamma 2.$$

From the latter inequality, one can deduce the bound
$$\ln \beta_0\geq -N\ln6-cc_*^{25}N^{40}\Big(N\ln^\gamma\ln^\frac 12 \frac{1}{2R}+\sum\limits^{N-1}_{i=0}\Big(\frac iN\Big)^\frac \gamma2\Big)\geq$$$$\geq-N\ln6-cc_*^{25}N^{41}\ln^\gamma\ln^\frac 12 \frac{1}{2R},$$
which is valid for $0<R\leq R_{*3}(\alpha)\leq 1/6$.
Taking into account that $N\leq c(g(2R))^\frac 43$, we conclude
$$\ln \beta_0\geq -c_1(c_*)\ln^\frac {239\alpha}3\sqrt{\ln \frac 1R}.$$
It remains to find $R_{*4}(c_*,\alpha)\leq 1$ such that
$$c_1(c_*)\ln^{\frac {239\alpha}3-1}\sqrt{\ln \frac 1R}\leq 1$$
for all $0<R\leq R_{*4}$. So, we have the required inequality provided $0<R\leq R_{*2}=\min \{R_{*1},R_{*3},R_{*4}\}$.\end{proof}

\setcounter{equation}{0}
\section{Proof of Proposition \ref{modcon}}
Now, we can state an analog  of Lemma 4.2 of \cite{NazUr} for the class $\mathcal V$.
\begin{lemma}
	\label{analog}
	Let $0\leq \pi\in\mathcal V_0$ possess the property $(\mathcal B_R)$ in $Q(2R)$. 	
	
	Suppose further that \begin{equation}
		\label{2nd}\pi\leq M_0k_R\end{equation}
in $Q(2R)$ for some $M_0\geq 1$.		
Then, there exists $\overline t\in [-R^2,-\frac 34R^2]$ such that
		\begin{equation}
			\label{consequence1
			}
			|e_{\kappa_0}(\overline t)|\geq \delta_0|B(R)|
		\end{equation}
		Here,  $\kappa_0=\kappa_0(f(2R))= c/(1+f(2R))$,
		$e_\kappa(t):=\{x\in \mathcal C(R): \pi(x,t)\geq\kappa k_R\}$,
		and $$ \delta_0(M_0,f(2R))=\Big(\frac c{M_0(1+f(2R))}\Big)^\frac94.$$
\end{lemma}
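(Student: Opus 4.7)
The proof adapts the Nazarov--Ural'tseva logarithmic test function technique to the axisymmetric drift--diffusion equation \eqref{GammaEq2}. The key structural ingredient is that property $(\mathcal B_R)$ together with the continuity of $\pi$ off $S^\pi$ forces any log-type sub-level function of $\pi$ to vanish in a neighbourhood of the axis $\{x'=0\}$, which simultaneously tames the pointwise singularity $|2x'/|x'|^2|\sim 1/|x'|$ and renders harmless the distributional divergence of $b=v+2x'/|x'|^2$ concentrated on the axis.

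\textbf{Log-subsolution and energy inequality.} With parameters $\kappa\in(0,1)$ and $\delta\in(0,\kappa)$ to be fixed in terms of $M_0$ and $f(2R)$, introduce
\[W:=\big(\ln(\kappa k_R/(\pi+\delta k_R))\big)_+,\]
so that a direct computation from \eqref{GammaEq2} gives, on $\{W>0\}$,
\[\partial_t W + b\cdot\nabla W - \Delta W + |\nabla W|^2=0,\]
making $W$ a weak subsolution of $\partial_t+b\cdot\nabla-\Delta$. Pick a spatial cutoff $\eta$ with $\eta=1$ on $\mathcal C(R)$, $\mathrm{supp}\,\eta\subset\mathcal C(3R/2)$, and $|\nabla\eta|\leq C/R$. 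Testing against $\eta^2$ and integrating over a time interval $(t_1,t_2)\subset(-4R^2,0)$---valid thanks to $\mathrm{div}\,b=0$ off the axis and the vanishing of $W$ there---produces
\[\int W\eta^2\,dx\bigg|_{t_1}^{t_2}+\tfrac12\int_{t_1}^{t_2}\!\!\int|\nabla W|^2\eta^2\,dxdt \leq C(t_2-t_1)R+C\!\int_{t_1}^{t_2}\!\!\int W\eta|b||\nabla\eta|\,dxdt.\]
On $\mathrm{supp}\,\nabla\eta\subset\{|x'|\geq R/2\}$ one has $|b|\leq|v|+C/R$; the $|v|$-piece is estimated by H\"older against $\|v\|_{L^4_tL^3_x(Q(2R))}\leq C\sqrt R\,f(2R)$ (from the definition of $f$), and the $1/R$-piece by the trivial pointwise bound $W\leq\ln(\kappa/\delta)$. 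Choosing $\kappa\sim 1/(1+f(2R))$ absorbs the bulk of the drift contribution into the dissipation.

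\textbf{Extraction of the measure estimate.} The $L^\infty$-bound $\pi\leq M_0k_R$ enters through the initial datum $W(\cdot,t_1)\leq\ln(\kappa/\delta)$, where $\delta$ is chosen slightly below the target threshold $\kappa_0$; this yields an explicit $(M_0,f(2R))$-dependent bound on $\int\!\!\int_{Q(R)}W\,dxdt$. Averaging in time furnishes an $\overline t\in[-R^2,-3R^2/4]$ at which $\int_{\mathcal C(R)}W(\cdot,\overline t)\,dx$ is uniformly controlled. An application of Chebyshev's inequality combined with a Gagliardo--Nirenberg-type interpolation between the pointwise bound on $W$ and the $L^2$-gradient bound from the previous step yields $|\{W(\cdot,\overline t)>M\}\cap\mathcal C(R)|$ controlled by a negative power of $M$; setting $M=\ln(\kappa/\kappa_0)$ with $\kappa_0=c/(1+f(2R))$ and using the identity $\{W\leq M\}\subset\{\pi\geq\kappa_0 k_R\}$ translates this into the required lower bound on $|e_{\kappa_0}(\overline t)|$. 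The principal technical obstacle is tracking the precise exponent $9/4$ in $\delta_0$: this requires a careful simultaneous optimisation of $\kappa$, $\delta$, the interpolation indices, and the threshold $M$ against $M_0$ and $f(2R)$.
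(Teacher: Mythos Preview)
Your approach has a genuine gap: by choosing the logarithmic function $W=(\ln(\kappa k_R/(\pi+\delta k_R)))_+$ so that it vanishes on the axis, you discard precisely the information that drives the lemma. Property $(\mathcal B_R)$ is a condition on the values of $\pi$ along $\{x'=0\}$, a set of \emph{zero} three-dimensional measure; it places no constraint whatsoever on $\int_{\mathcal C(R)} W(\cdot,t_1)\,dx$, which is still only bounded by $c\ln(\kappa/\delta)\,R^3$ coming from $\pi\ge 0$. Your energy inequality therefore yields nothing better than $\int W(\cdot,t_2)\lesssim \ln(\kappa/\delta)(1+f(2R))\,R^3$, and feeding this into Chebyshev forces the threshold $M\gtrsim \ln(\kappa/\delta)(1+f(2R))$, so that $\kappa e^{-M}-\delta$ collapses; the argument is circular and never closes. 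The assertion that the bound $\pi\le M_0k_R$ ``enters through the initial datum $W(\cdot,t_1)\le\ln(\kappa/\delta)$'' is also incorrect: that bound uses only $\pi\ge 0$, so $M_0$ never actually appears in your estimates.

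The paper's proof works by the opposite mechanism: rather than killing the distributional divergence of the singular drift $2x'/|x'|^2$, it \emph{exploits} it. Testing the equation for $\pi$ itself against a linear cut-off $\eta$ and carefully passing to the limit near the axis yields the identity
\[
\int_Q\!\big(-\pi\partial_t\eta-\pi\Delta\eta-(v+2x'/|x'|^2)\cdot\nabla\eta\,\pi\big)\,dz
=4\pi_0\!\int\!\!\!\int \pi(0,x_3,t)\,\eta(0,x_3,t)\,dx_3\,dt,
\]
where the right-hand side is the line-mass produced by $\mathrm{div}(2x'/|x'|^2)=4\pi_0\,\delta_{\{x'=0\}}$. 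Property $(\mathcal B_R)$ then gives a \emph{lower} bound $\gtrsim k_R R^2$ on this right-hand side, which must be matched by the left-hand side; splitting the latter over $E_\kappa$ and its complement (using $\pi<\kappa k_R$ off $E_\kappa$ and $\pi\le M_0k_R$ on $E_\kappa$) forces $|E_{\kappa_0}|\gtrsim R^5$ once $\kappa_0\sim c/(1+f(2R))$. The exponent $9/4$ falls out of the H\"older pairing with $1/|x'|\in L_{9/5,\infty}$. The logarithmic technique you describe is what the paper uses for Lemmas~\ref{theta0} and~\ref{iterations}, where an initial \emph{measure} hypothesis is available to be propagated; here no such hypothesis exists, and the axis source term is indispensable.
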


\begin{proof}
Here, we 
follow arguments of 
the paper \cite{NazUr}. They are based on the identity:
$$\int\limits_{Q}(-\pi\partial_t\eta- \pi\Delta \eta-(v+2x'/|x'|^2)\cdot\nabla \eta\pi)dxdt= $$
\begin{equation}\label{crucialidentity}
	=4\pi_0
\int\limits_{-1}^0\int\limits_{-1}^1
\pi(0,x_3,t)\eta(0,x_3,t) dx_3dt,
\end{equation}
which is valid for any  non-negative test function $\eta$  supported in $Q$. Here, $\pi_0=3.14...$.  Although 
a similar statement 
has been proven in \cite{NazUr} under the assumption  that $\pi$ is Lipschitz, it remains to be 
true for functions $\pi$ from the class $\mathcal V_0$ as well. Indeed, take a smooth cut-off function $\psi=\psi(x')$ so that $\psi(x')=\Psi(|x'|)$, $0\leq \psi\leq 1$, $\psi(x')=0$ if $|x'|\leq \varepsilon/2$, $\psi(x')=1$ if $|x'|\geq \varepsilon$, $\Psi'(\varrho)\leq c/\varrho$ and $\Psi''(\varrho)\leq c/\varrho^2$ for some positive constant $c$. Then, it follows from (\ref{GammaEq2}) that: 
$$\int\limits_{Q}
\Big(\pi\partial_t(\eta\psi)+\pi(u+b)\cdot\nabla(\eta\psi)+\pi\Delta(\eta\psi)\Big)dz=0.
$$
There are two difficult terms for passing to the limit as $\varepsilon\to0$. The first one is as follows:
$$I_1:=\int\limits_{Q}\pi\eta\Delta \psi dx dt=J_1+J_2,$$
where
$$J_1:=\int\limits_{Q}(\pi\eta-(\pi\eta)|_{x'=0})\Delta\psi dx dt,
$$
For $J_2$, we find
$$J_2:=\int\limits_{Q}(\pi\eta)|_{x'=0}\Delta\psi dx dt=\int\limits^0_{-1}\int\limits^1_{-1}(\pi\eta)|_{x'=0}dx_3dt\int\limits_{|x'|<1}\Delta \psi(x')dx'$$
and 
$$\int\limits_{|x'|<1}\Delta \psi(x')dx'=2\pi_0\int\limits^\varepsilon_\frac \varepsilon 2\frac 1\varrho\frac \partial{\partial \varrho}\Big (\varrho\Psi'(\varrho)
\Big )\varrho d\varrho=2\pi_0\varrho\Psi'(\varrho)\Big|^\varepsilon_\frac \varepsilon2=0.$$
Now, we wish to show that 
$$J_1:=\int\limits_{Q}\xi\Delta\psi dx dt \to0$$
as $\varepsilon\to0$, where, 
$\xi:=\pi\eta-(\pi\eta)|_{x'=0}.$
To this end, let us introduce the function
$$H_\varepsilon(x_3,t):=\int\limits_{\frac \varepsilon 2 <\varrho<\varepsilon}\xi\Delta \psi dx'.$$ 
It can be bounded from above and from below
$$|H_\varepsilon(x_3,t)|\leq c \sup\limits_{{\rm spt} \eta}\pi\sup\limits_{|x'|<1}\eta(x',x_3,t)\frac 1{\varepsilon^2}\int\limits_{\frac \varepsilon2}^\varepsilon\varrho d\varrho=:h(x_3,t)$$
provided $\varepsilon<1$.
The function $h$ is supported in $]-1,1[ \times ]-1,0[$ and thus 
$$\int\limits_{-1}^1\int\limits^0_{-1}h(x_3,t)dx_3dt<\infty.
$$

Now, let $(0,x_3,t)$ be a regular point of $\pi$, i.e., $(0,x_3,t)\notin S^\pi$. Then, 
$\xi(x',x_3,t)\to 0$ as $|x'|\to 0$ and thus for any $\delta>0$ there exists a number $\tau(x_3,t)>0$ such that $|\xi(x',x_3,t)|<\delta$ provided $|x'|<\tau$. So, 
$$|H_\varepsilon(x_3,t)|<c\frac \delta{\varepsilon^2}\int\limits^\varepsilon_\frac \varepsilon2\varrho d\varrho=c\frac \delta  2$$
provided $\varepsilon<\tau$. Therefore, $H_\varepsilon(x_3,t)\to 0$ as $\varepsilon\to0$ and by the Lebesgue theorem on dominated convergence, we find that $$J_1=\int\limits_{-1}^1\int\limits_{-1}^0H_\varepsilon(x_3,t)dx_3dt\to0$$
as $\varepsilon\to0$.

Similar arguments work for the second difficult term:
$$I:=\int\limits_{Q}\pi\eta b\cdot \nabla \psi dz=J_1+J_2,$$
where 
$$J_1=\int\limits_{Q}\xi b\cdot \nabla \psi dz $$
and 
$$J_2:=\int\limits_{Q}(\pi\eta)|_{x'=0}b\cdot \nabla \psi dx dt=\int\limits^0_{-1}\int\limits^1_{-1}(\pi\eta)|_{x'=0}dx_3dt2\pi_0\int\limits_\frac \varepsilon2^\varepsilon  \frac 2\varrho\Psi'(\varrho)\varrho d\varrho=$$
$$=4\pi_0\int\limits^0_{-1}\int\limits^1_{-1}(\pi\eta)|_{x'=0}dx_3dt.$$

The fact that $J_1\to0$ as $\varepsilon\to0$ can be justified in the same way as above, replacing $H_\varepsilon$ with the function
$$G_\varepsilon(x_3,t):=\int\limits_{\frac \varepsilon2<|\
x'|
<\varepsilon}\xi b\cdot\nabla \psi dx'.
$$

Other terms can be treated in a similar way and even easier. So,  the required identity (\ref{crucialidentity}) has been proven.

Now, let us select the test function $\eta$ in (\ref{crucialidentity}), using the following notation
$$Q^{\lambda,\theta}(z_0,R):=\mathcal C(x_0,\lambda R)\times]t_0-\theta R^2,t_0[,
$$
so that $\eta=1$ in $Q^{\frac 12,\frac 18}((0,-\frac {13}{16}R^2),R)$, $\eta=0$ out of $Q^{1,\frac 14}((0,-\frac 34R^2),R)$ and 
$|\partial_t\eta|+|\nabla \eta|^2+|\nabla^2\eta|\leq c/R^2$. Taking into account that $\pi$ has the property $(\mathcal B_R)$, we find
$$\frac {\pi_0}2k_RR^2\leq \frac c{R^2}\int\limits_{Q^{1,\frac 14}(z_R,R)}\pi dz+\frac cR\int\limits_{Q^{1,\frac 14}(z_R,R)}\pi |v|dz+\frac cR\int\limits_{Q^{1,\frac 14}(z_R,R)}\frac \pi {|x'|}dz,$$
where $z_R=(0,-\frac 34R^2)$.

Setting
$ E_\kappa=\{(x,t): t\in ]-R^2,-\frac 34R^2[, x\in e_\kappa(t)\},$ we can deduce from the latter inequality 
$$\frac {\pi_0}2k_RR^3\leq$$
$$\leq \frac c{R^2}\int\limits_{Q^{1,\frac 14}(z_R,R)\setminus E_\kappa}\pi dz+\frac cR\int\limits_{Q^{1,\frac 14}(z_R,R)\setminus E_\kappa}\pi |v|dz+\frac cR\int\limits_{Q^{1,\frac 14}(z_R,R)\setminus E_\kappa}\frac \pi {|x'|}dz+$$
$$+ \frac c{R^2}\int\limits_{Q^{1,\frac 14}(z_R,R)\cap E_\kappa}\pi dz+\frac cR\int\limits_{Q^{1,\frac 14}(z_R,R)\cap E_\kappa}\pi |v|dz+\frac cR\int\limits_{Q^{1,\frac 14}(z_R,R)\cap E_\kappa}\frac \pi {|x'|}dz.$$
Applying (\ref{2nd}) and recalling definitions of the sets $e_\kappa(t)$ and $E_\kappa$, we can get 
$$\frac {\pi_0}2k_RR^3\leq$$
$$\leq \frac {c\kappa k_R}{R^2}\Big\{|Q^{1,\frac 14}(R)|
+R\int\limits_{Q^{1,\frac 14}(z_R,R)\setminus E_\kappa} |v|dz+R\int\limits_{Q^{1,\frac 14}(z_R,R)\setminus E_\kappa}\frac 1 {|x'|}dz\Big\}+$$
$$+\frac {cM_0k_R}{R^2}\Big\{|E_\kappa|+R\int\limits_{Q^{1,\frac 14}(z_R,R)\cap E_\kappa} |v|dz+R\int\limits_{Q^{1,\frac 14}(z_R,R)\cap E_\kappa}\frac 1 {|x'|}dz\Big\}.$$

We need to estimate integrals in the above inequality. First, for integrals, containing $v$, Holder inequality gives
$$\int\limits_{Q^{1,\frac 14}(z_R,R)\setminus E_\kappa}|v|dx\leq
\|\mathbb I\|_{\frac 32,\frac 43,Q^{1,\frac 14}(R)}\Big(\int\limits^{-\frac 34R^2}_{-R^2}\Big(\int\limits_{\mathcal C(R)}|v|^3dx\Big)^\frac 43dt\Big)^\frac 14\leq $$
$$\leq f(2R)R^\frac 12\|\mathbb I\|_{\frac 32,\frac 43,Q^{1,\frac 14}(R)}\leq f(2R)R^4$$
and similarly 
$$\int\limits_{Q^{1,\frac 14}(z_R,R)\cap E_\kappa} |v|dz\leq f(2R)R^\frac 12\|\mathbb I\|_{\frac 32,\frac 43,E_\kappa}.$$

To evaluate the last two integrals, let us take into account the fact:
$$\frac 1{|x'|}\in L_{\frac 95,\infty}(Q^{1,\frac 14}(z_R,R)).$$
Then,
$$ \int\limits_{Q^{1,\frac 14}(z_R,R)\setminus E_\kappa}\frac 1 {|x'|}dz\leq \|\frac 1{|x'|}\|_{\frac 95,\infty,Q^{1,\frac 14}(z_R,R)}\|\mathbb I\|_{\frac 94,1,Q^{1,\frac 14}(R)}\leq 
$$
$$\leq cR^\frac 23R^\frac {10}3=cR^4
$$
and 
$$\int\limits_{Q^{1,\frac 14}(z_R,R)\cap E_\kappa}\frac 1 {|x|}dz\leq c R^\frac 23\|\mathbb I\|_{\frac 94,1,E_\kappa}.$$

Hence, we have 
$$\frac {\pi_0}2k_RR^3\leq c\kappa k_RR^3(1+f(2R))+$$
$$+\frac {cM_0k_R}{R^2}\Big[|E_\kappa|+f(2R)R^\frac 32\|\mathbb I\|_{\frac 32,\frac 43,E_\kappa}+R^\frac 53\|\mathbb I\|_{\frac 94,1,E_\kappa}\Big].$$
So,
$$\frac {\pi_0}2\leq c\kappa (1+f(2R))
+\frac {cM_0}{R^5}\Big[|E_\kappa|+f(2R)R^\frac 32\|\mathbb I\|_{\frac 32,\frac 43,E_\kappa}+R^\frac 53\|\mathbb I\|_{\frac 94,1,E_\kappa}\Big].$$
Now, one can find $\kappa=\kappa_0(f(2R))=c/(1+f(2R))$ such that
$$\frac {cM_0}{R^5}\Big[|E_{\kappa_0}|+f(2R)R^\frac 32\|\mathbb I\|_{\frac 32,\frac 43,E_{\kappa_0}}+R^\frac 53\|\mathbb I\|_{\frac 94,1,E_{\kappa_0}}\Big]\geq 1.$$
It remains to estimate two integrals on the left hand side of the latter inequality:
$$\|\mathbb I\|_{\frac 32,\frac 43,E_{\kappa_0}}=\Big(\int\limits^{-\frac 34R^2}_{-R^2}|e_\kappa(t)|^\frac 89dt\Big)^\frac 34\leq c|E_{\kappa_0}|^\frac 23R^\frac 16
$$ and
$$\|\mathbb I\|_{\frac 94,1,E_\kappa}\leq c|E_{\kappa_0}|^\frac 49R^\frac {10}9.
$$
Letting $A=|E_{\kappa_0}|/R^5$, we arrive at the following inequality
$$f(A):=A+A^\frac 49+f(2R)A^\frac 23\geq \frac 1{cM_0}.$$
Since $f'(A)>0$ for $A>0$, we can state that the last inequality implies
$$\frac {|E_{\kappa_0}|}{|\mathcal C(R)|\frac 14R^2}\geq\delta_0=\Big(\frac c{M_0(1+f(2R))}\Big)^\frac 94.$$

 It is not so difficult to show the exisence 
of $\overline t\in [-R^2,-\frac 34R^2]$  with the property:
$$|e_{\kappa_0}(\overline t)|\frac 14R^2\geq |E_{\kappa_0}|.$$
So, it is proven that
there exists $\bar t\in [-R^2,-3R^2/4]$ such that
\begin{equation}
	\label{basic ineq}|\{x\in \mathcal C(R): \pi(x,\bar t)>\kappa_0 k_R\}|\geq \delta_0 |\mathcal C(R)|,
\end{equation}
which completes the proof of the lemma.
\end{proof}

Now, we are able to  prove Proposition \ref{modcon}.

 Assume that the function $\pi$ meets all the conditions of Lemma \ref{analog} and  according to it, we can claim that: 
$$|e_{\kappa_0}(\overline t)|=|\{x\in \mathcal C(R): \pi(x,\overline t)\geq\kappa_0 k_R\}|\geq \delta_0|\mathcal C(R)|$$	
for some $\overline t\in [-R^2,-\frac 34R^2]$, $\kappa_0=c/g(2R)$, and $\delta_0=c(M_0)/g^\frac 94(2R)$. Now, we can  calculate
$$\theta(\delta_0(M_0,f(2R)),f(2R))\geq c\Big(\frac {\delta_0^6}{1+\delta_0^2f(2R)}\Big)^\frac 43\geq $$
$$\geq c(M_0)\Big(\frac 1{g(2R)}\Big)^{18},$$ 
apply Lemma \ref{theta0}, and find
$$|\{\pi(\cdot,t)\geq \delta_0\kappa_0k_R/3\}\cap\mathcal C(R)|>\delta_0/3|\mathcal C(R)|$$
for all $t\in [\overline t,t_0]$ with $t_0=\overline t+\theta_0R^2$ and $\theta_0=c(M_0)(g(2R))^{-18}$.

Next, it follows from Lemma \ref{iterations} that: 
$$|\{\pi<2^{-s}\delta_0\kappa_0k_R/3\}\cap Q^{1,\theta_0}((0,t_0),R)|\leq \mu_*
|Q^{1,\theta_0}(R)|,$$ where
$$s={\rm entier}\Big(\frac {c}{\delta_0^2\mu_*^2\theta_0}(1+f(2R))\Big)+1$$
and $\mu_*$ is the number that appears in Corollary \ref{cor1}, see also Proposition \ref{Mozer}.  In our case,
$$\mu_*=\Big(\frac 1{2c_1(3/4,1,\theta_0/2,\theta_0,M(2R))}\Big)^\frac {10}3$$
and, moreover 
$$c_1(3/4,1,\theta_0/2,\theta_0,M(2R))\leq c\theta_0^{-\frac32}g^3(2R)\leq c(M_0)(g(2R))^{30}.
$$
Then, Corollary \ref{cor1} implies the bound
$$\pi\geq2^{-s}\delta_0\kappa_0k_R/6=\hat\beta_2\kappa_0k_R$$
in $Q^{\frac 34,\frac 12\theta_0}((0,t_0),R)$.
So, combining previous estimates, we find the following: 
$$\hat\beta_2=\frac 162^{-s}\delta_0\geq e^{-sln2-\ln6}\delta_0\geq e^{-cs}\delta_0,
$$
where
$$s\leq \frac {2g(2R)}{\delta_0^2\mu_*^2\theta_0}\leq c(M_0)g(2R)(g(2R))^\frac 92(g(2R))^{18}{c_1}^\frac {20}3\leq$$
$$\leq c(M_0)(g(2R))^\frac {47}2(g(2R))^{30})^\frac {20}3\leq c(M_0) (g(2R))^{224}.
$$
So, $$\hat\beta_2\geq e^{-c(M_0)(g(2R))^{224}}c(M_0)(g(2R))^{-\frac 94}\geq e^{-2c(M_0)(g(2R))^{224}}\geq$$$$
\geq e^{-c(M_0,c_*)\ln^{224\alpha}\sqrt{\ln \frac1{R}}}.$$
Obviously, there exists a number $0<R_{*5}(M_0,c_*,\alpha)\leq \min\{1/6,R_{*2}\}$ such that
$$2c(M_0,c_*)\ln^{224\alpha-1}\sqrt{\ln \frac1{R}}\leq 1$$
and 
$$c(M_0,c_*)\ln^{224\alpha}\sqrt{\ln \frac1{R}}\geq \ln\ln^\alpha\sqrt{\ln \frac 1R}$$
for $0<R\leq R_{*5}(M_0,c_*,\alpha)$ and thus
$$-c(M_0,c_*)\ln^{224\alpha}\sqrt{\ln \frac1R}=-2c(M_0,c_*)\ln^{224\alpha}\sqrt{\ln \frac1R}+$$$$+c(M_0,c_*)\ln^{224\alpha}\sqrt{\ln \frac1R}\geq -\ln\sqrt{\ln \frac1R}+\ln\ln^\alpha\sqrt{\ln \frac 1R}.$$
Now, the number $\hat\beta_2$ is estimated as follows:
\begin{equation}
	\label{hatbeta2}
\hat\beta_2\geq \Big(\ln\frac 1R
\Big)^{-\frac 12}\ln\ln^\alpha\sqrt{\ln \frac 1R}
\end{equation} for $0<R\leq R_{*5}(M_0,c_*,\alpha)$. 

Since $$-R^2\leq \overline t+\theta_0/2R^2=t_0-\theta_0/2R^2<t_0=\overline t+\theta_0R^2\leq -\frac 34R^2+\frac 14R^2=-\frac 12R^2,$$
 there is  $\overline t_1\in [-R^2,-\frac 12R^2]$ such that
$$
\pi(\cdot,\overline t_1)>\hat\beta_2\kappa_0k_R$$
in $\mathcal C(\frac 34R)$. 
It allows us to apply Lemma \ref{mainiter} with $\theta=1/2$, with $\frac 34R$ instead of $R$, with $\overline t_1$ instead of $\overline t$, and with $\hat\beta_2\kappa_0k_R$ instead of $k$.  According to Lemma \ref{mainiter}, the inequality 
$$\pi \geq \beta_0\hat\beta_2\kappa_0k_R$$
holds in $Q(R/2)$. It follows from Lemma \ref{mainiter} and from (\ref{hatbeta2}) that
$$\pi \geq \frac {c(c_*)k_R}{\ln (\frac 1R)}
=\beta(2R)k_R$$
in $Q(R/2)$.

By our assumption imposed on function $\sigma$, we can put $k_R=\frac 12{\rm osc}_{z\in Q(2R)}\sigma(z)$. 
Then, either $\pi=\sigma-m_{2R}$ or $\pi=M_{2R}-\sigma(z)$ satisfies all the conditions of the proposition with $M_0=2$. Simple arguments show that
$${\rm osc}_{z\in Q(R/2)}\sigma(z)\leq \Big(1-\frac 12\beta(2R)\Big){\rm osc}_{z\in Q(2R)}\sigma(z).
$$
Now, after iterations of the latter inequality, we arrive at the following bound
$${\rm osc}_{z\in Q(R/2^{2k+1})}\leq \prod\limits^{k}_{i=0}\Big((1-\frac 12\beta(R/2^{2k+1})\Big){\rm osc}_{z\in Q(2R)}\sigma(z)=$$$$=\eta_k{\rm osc}_{z\in Q(2R)}\sigma(z)$$
being valid for any natural number $k$.

In order to evaluate $\eta_k$, take $\ln$  of it. As a result, 
$$\ln \eta_k=\sum\limits^k_{i=0}\ln \Big((1-\frac 12\beta(R/2^{2k+1})\Big)\leq -\sum\limits^k_{i=0}\frac 12\beta(R/2^{2k+1})=$$
$$=-c(c_*)\sum\limits^k_{i=0}(\ln ( 2^k/R))^{-1}
=-c(c_*)\sum\limits^k_{i=0}\frac 1{k\ln 2+\ln 1/R
}\leq$$
$$\leq-c(c_*)\int\limits^{k+1}_0\frac {dx}{x\ln 2+\ln 1/R
}=$$$$=-c(c_*)\Big(\ln(2^{k+1}/R)
-\ln(1/R)\Big)=-c(c_*)\Big(\ln(2^{k+1})
\Big).
$$
So, (\ref{osc}) follows. The proof of  Proposition \ref{modcon} is complete.

\setcounter{equation}{0}
\section{Proof of Theorem \ref{mainresultglobal}}

By the maximimum principle, we have 
$|\sigma| =|\varrho v_\varphi|\leq \Sigma_0$
in $\mathbb R^3\times ]0,T[$. From Proposition \ref{modcon}, it follows that
$$|\sigma(\varrho,x_3,t)|\leq C_1(c_*)\Big(\frac \varrho{2R_*}\Big)^{C_2(c_*)} 2\Sigma_0
$$ fo all $0<\varrho\leq R_*(c_*,\alpha)$, for all $x_3\in\mathbb R$, and for $t\in ]T-R_*^2,T[$.
For $\varrho>R_*$, we simply have 
$$|\sigma(\varrho,x_3,t)|\leq \Sigma_0\Big(\frac \varrho{R_*}\Big)^{C_2(c_*)}.$$
It remains to notice that $v(\cdot, T-R_*^2)\in H^2$. Therefore, one can use the main result
of the paper \cite{ChenFangZhang2017}, see also \cite{MiaoZheng2013} and \cite{LeiZhang2017}, for the Cauchy problem for the Navier-Stokes system (\ref{NSE}) in $\mathbb R^3\times  ]T-R_*^2,T[$ and conclude that $v$ is a strong solution in the interval $]0,T[$.

\end{document}